\titleformat{\section}{\normalfont\scshape\centering}{\thesection}{1em}{}
\titleformat{\subsection}{\bfseries}{\thesubsection}{1em}{}
\newtheorem{theorem}{Theorem}[section]
\newtheorem{lemma}[theorem]{Lemma}
\newtheorem{proposition}[theorem]{Proposition}
\theoremstyle{definition}
\newtheorem{remark}[theorem]{Remark}
\numberwithin{equation}{section}
\renewcommand\d{\textnormal{d}}
\begin{document}

\title{Almost primes in almost all short intervals II}

\author{Kaisa Matom\"aki}
\address{Department of Mathematics and Statistics, University of Turku, 20014 Turku, Finland}
\email{ksmato@utu.fi}

\author{Joni Ter\"{a}v\"{a}inen}
\address{Department of Mathematics and Statistics, University of Turku, 20014 Turku, Finland}
\email{joni.p.teravainen@gmail.com}

\subjclass[2020]{11N05, 11N36}

\begin{abstract} We show that, for almost all $x$, the interval $(x, x+(\log x)^{2.1}]$ contains products of exactly two primes. This improves on a work of the second author that had $3.51$ in place of $2.1$. To obtain this improvement, we prove a new type II estimate. One of the new innovations is to use Heath-Brown's mean value theorem for sparse Dirichlet polynomials.
\end{abstract}

\maketitle

\section{Introduction}

We shall study the distribution of $E_2$ numbers, i.e. numbers with exactly two prime factors, in almost all short intervals. This problem has been studied in previous works of Heath-Brown~\cite{hb-1978}, Motohashi~\cite{motohashi}, Wolke~\cite{wolke}, Harman~\cite{harman-almostprimes}, and the second author~\cite{tera-primes}. 

The best known result~\cite[Theorem 3]{tera-primes} gives that, for almost all $x$, the interval $(x,x+(\log x)^{3.51}]$ contains $E_2$ numbers (here, and in the rest of the paper, we say that a property $P(x)$ holds for almost all $x$ if the measure of $x\in [1,X]$ for which $P(x)$ fails is $o(X)$ as $X\to \infty$). In this paper, we strengthen this result by replacing the exponent $3.51$ by $2.1$. In the theorem and later, $p_j$ always denotes a prime.

\begin{theorem}\label{thm_main}
There exist constants $c_0 > 0$ and $\delta>0$ such that the following holds. Let $X\geq 3$. Then, for all but $\ll X/(\log X)^{\delta}$ integers $x \in [2, X]$, we have
\begin{align*}
|\{p_1 p_2 \in (x,x+(\log x)^{2.1}] \colon (\log x)^{1.09} < p_1 \leq (\log x)^{1.1}\}| \geq c_0 (\log x)^{1.1}.    
\end{align*}
\end{theorem}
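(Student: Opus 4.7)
The plan is to reduce the theorem to a variance estimate and then to a mean value of Dirichlet polynomials on the critical line. Set $h = (\log X)^{2.1}$ and $P_1 = (\log X)^{1.1}$, and let
\[ a(n) = \sum_{\substack{n = p_1 p_2 \\ (\log X)^{1.09} < p_1 \le P_1}} 1. \]
By a standard Chebyshev-type argument (a lower bound from the first moment, combined with an upper bound for $a(n)^2$ via divisor function bounds), the conclusion follows once one establishes the variance bound
\begin{align*}
\int_X^{2X} \Big(\sum_{x < n \le x+h} a(n) - M(x,h)\Big)^2 \, dx \ll \frac{X h^2}{(\log X)^{2+2\delta}}
\end{align*}
for some fixed $\delta > 0$ and a smooth main term $M(x,h)$ approximating the expected count in a window of length $h$.

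Next I would apply a Saffari--Vaughan--Parseval identity to translate this variance into an integral over the critical line. Writing
\[ P(s) = \sum_{(\log X)^{1.09} < p \le P_1} p^{-s}, \qquad Q(s) = \sum_{X/(2P_1) < p \le X/P_1} p^{-s}, \]
the essential task becomes to bound an integral of the form
\begin{align*}
\int_{T_0}^{T_1} |P(1/2+it) Q(1/2+it)|^2 \, dt \ll \frac{1}{h (\log X)^{2\delta}},
\end{align*}
with $T_0$ of size roughly $X/h$ and $T_1$ of size roughly $X$, after subtracting main terms and handling the extreme tails by pointwise bounds. I would then dyadically decompose into intervals $[T, 2T]$ and, on each piece, apply H\"older's inequality to isolate a high moment of $P$ and a complementary moment of $Q$.

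The key new ingredient, signalled in the abstract, is Heath-Brown's mean value theorem for \emph{sparse} Dirichlet polynomials, applied to the short polynomial $P$. Since $P$ is supported on only $K \asymp P_1/\log P_1$ primes, Heath-Brown's estimate yields a bound of the shape $\int_T^{2T} |P(1/2+it)|^{2k} dt \ll (T + K^{2k-1}) (\log X)^{O(1)}$ (after normalising $P$ appropriately), which for a well-chosen $k$ beats the bound $T \cdot P_1^{k-1}$ given by Montgomery's classical mean value theorem. The polynomial $Q$, of length nearly $X$, is handled by standard fourth- and sixth-moment bounds for prime-supported Dirichlet polynomials, which rest in turn on zero-density estimates for $\zeta(s)$.

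The main obstacle will be the transition range where $T$ is close to $X^{1/2}$: here the ``diagonal'' term $T$ in the sparse mean value is comparable to the ``off-diagonal'' term $K^{2k-1}$, and the choice of the moment exponent $k$ for $P$ must be balanced carefully against the pointwise and mean-value information available for $Q$. The previous exponent $3.51$ was obtained using only the classical mean value theorem; the improvement to $2.1$ amounts to exploiting the sparsity of $P$ optimally in exactly this transition regime, which is precisely what Heath-Brown's sparse mean value theorem makes possible. Ensuring that the bound is uniform in the dyadic parameter $T$, and subtracting the correct main term from $PQ$ so that the integrand genuinely exhibits the expected cancellation, should account for most of the technical effort.
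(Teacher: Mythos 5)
The overall architecture you describe (a Chebyshev/variance reduction, a Perron--Parseval passage to a mean value of Dirichlet polynomials over $t$ ranging up to about $X/h$, and amplification by a high power of the short polynomial over primes near $(\log X)^{1.1}$, whose powers have sparse, smooth-number support and are therefore amenable to Heath-Brown's theorem) does match the paper's skeleton. But there is a genuine gap at the core of the plan: you take the long factor to be $Q(s)=\sum_{X/(2P_1)<p\le X/P_1}p^{-s}$, supported on \emph{primes}, and propose to control it by ``standard fourth- and sixth-moment bounds for prime-supported Dirichlet polynomials, which rest in turn on zero-density estimates for $\zeta(s)$.'' This step fails at the interval length $(\log X)^{2.1}$. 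For a prime-supported polynomial of length $N\approx X^{1-o(1)}$ integrated over $t$ up to $X/h$, zero-density technology is far too weak: the paper itself points out that approaches working with the prime indicator on the long variable reach only $h=(\log x)^{3+\varepsilon}$, and even that only \emph{conditionally} on the strengthened density hypothesis $N(\sigma,T)\ll T^{(2-\delta)(1-\sigma)+o(1)}$; unconditionally such methods stop at $3.51$. Nothing close to $2.1$ can come from treating the long prime polynomial directly.

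The missing idea is to replace $1_{\mathbb{P}}$ on the long variable by a Harman-sieve minorant $\rho^-(n)\le 1_{\mathbb{P}}(n)$ (built from iterated Buchstab identities with sieving parameter $z=X^{2/11}$), which still has positive average over long intervals but decomposes into type I, type I/II and type II bilinear sums. These are then estimated by the twisted fourth-moment theorems of Watt and Deshouillers--Iwaniec (type I and I/II) and by Jutila's large-value theorem combined with Heath-Brown's sparse mean value theorem (type II). Note also that Heath-Brown's estimate enters as a \emph{mixed} mean value $\int |P_1(1+it)^k|^2|M_2(1+it)|^2\,\d t$, with $P_1(s)^k$ supported on $(\log X)^{1.1}$-smooth numbers and $M_2$ a type II factor, rather than as a pure high moment of the short polynomial as you suggest. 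Finally, your ``main obstacle'' is misidentified: after the reduction the integration range is $[X^{1/1000}, X/h]$ and the delicate case is the type II regime in which both bilinear factors take values near the Jutila threshold $M_j^{-49/206}$, not a transition near $T\approx X^{1/2}$.
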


One can show that the lower bound in Theorem~\ref{thm_main} is of the correct order of magnitude, although it is only for those $E_2$ numbers that have a prime factor in a certain superdyadic interval. 

We remark that the limit of the approaches in~\cite{harman-almostprimes, tera-primes} was the exponent $3+\varepsilon$, which could be reached in~\cite{harman-almostprimes} conditionally assuming the following slight strengthening of the density hypothesis: For any $\varepsilon > 0$, there exists $\delta = \delta(\varepsilon) > 0$ such that, for any $\sigma \in [1/2+\varepsilon, 1]$ and $T \geq 3$, one has
\begin{equation}
\label{eq:densHyp}
N(\sigma, T) \ll T^{(2-\delta)(1-\sigma) + o(1)},
\end{equation}
where $N(\sigma, T)$ is the number of zeros of the Riemann zeta-function in the rectangle $\{b+it \colon b \geq \sigma, |t| \leq T\}$.

On the other hand, a result of Selberg~\cite{selberg} from 1943 shows that under the Riemann hypothesis almost all intervals $(x,x+(\log x)^{2+\varepsilon}]$ contain primes, and this easily implies that almost all such intervals contain $E_2$ numbers as well (since if $p\in (x/2,x/2+(\log x)^{2+\varepsilon}/2]$ is a prime, then $2p\in (x,x+(\log x)^{2+\varepsilon}]$ is an $E_2$ number). Theorem~\ref{thm_main} gets somewhat close to the exponent of $2+\varepsilon$, which seems to be the barrier for $E_2$ numbers even under the Riemann hypothesis. In fact, as discussed in Section~\ref{se:H-B}, in order to obtain $2+\varepsilon$ for $E_2$ numbers, it suffices to assume the Lindel\"of hypothesis which of course is a weaker assumption than the Riemann hypothesis. Actually we believe that the above variant of the density hypothesis is a sufficient assumption for obtaining $2+\varepsilon$ but we plan to return to this on a later occasion.

In addition to $E_2$ numbers, also $P_2$ numbers that have at most two prime factors are called almost primes. The question of short interval distribution for these is significantly easier since classical sieve methods are applicable. Indeed, the first author~\cite{matomaki-P2} has recently shown that, for almost all $x$, the interval $(x, x+h(x)\log x]$ contains $P_2$ numbers, provided only that $h(x) \to \infty$ as $x \to \infty$.

For $E_3$ numbers, i.e. numbers with exactly three prime factors, much shorter intervals can be reached than for $E_2$ numbers; the second author showed in~\cite{tera-primes} that, for almost all $x$, the interval $(x, x+(\log x)(\log \log x)^{6+\varepsilon}]$ contains $E_3$-numbers. On the other hand, for the primes, the best known result due to Jia~\cite{jia} gives that, for almost all $x$, the interval $(x, x+x^{1/20}]$ contains primes. Hence we understand the short interval distribution of $E_k$ numbers for $k \geq 2$ significantly better than that of the primes.

We lastly note that the same method that we apply for $E_2$ numbers in almost all intervals readily adapts to $E_3$ numbers in \emph{all} intervals. Indeed, following the proof of Theorem~\ref{thm_main} very closely, we obtain in Section~\ref{sec:allintervals} the following. 

\begin{theorem}\label{thm_all} For all large enough $x$, the interval $(x,x+\sqrt{x}(\log x)^{1.55}]$ contains $E_3$ numbers. 
\end{theorem}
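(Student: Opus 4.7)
The plan is to mirror the proof of Theorem~\ref{thm_main}, now counting $E_3$ numbers $n = p_1 p_2 p_3 \in (x,x+H]$ with $H = \sqrt{x}(\log x)^{1.55}$, where $p_1 \in ((\log x)^{1.09},(\log x)^{1.1}]$ plays the role of the small prime from Theorem~\ref{thm_main}, where $p_3 \in (\sqrt{x}, 2\sqrt{x}]$ is a new medium prime, and where $p_2 \asymp \sqrt{x}/(\log x)^{1.1}$ is determined accordingly. The count factors through the Dirichlet polynomials $P_j(s) = \sum_{p \in I_j} p^{-s}$ for $j=1,2,3$, and one seeks a lower bound valid for every sufficiently large $x$ rather than for almost all $x$.

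First, via Perron's formula and a shift of contour to the critical line, the count becomes a main term of order $H/\log x$ plus an error controlled (up to negligible tails) by
\begin{align*}
\frac{H}{\sqrt{x}}\int_{|t|\leq x/H}\bigl|P_1(\tfrac12+it)P_2(\tfrac12+it)P_3(\tfrac12+it)\bigr|\,\d t.
\end{align*}
The crucial observation, and the source of the gain from the exponent $2.1$ to $1.55$, is that the extra factor $\sqrt{x}$ in $H$ no longer has to be recovered by averaging the analogous error over $x\in[X,2X]$: the length-$\sqrt{x}$ polynomial $P_3$ is already present in the integrand. Thus the same mean-value input used in the proof of Theorem~\ref{thm_main} — in particular Heath-Brown's mean-value theorem for sparse Dirichlet polynomials, applied to $P_1 P_2$ viewed as supported on the sparse set $\{p_1 p_2\}$ — now delivers a pointwise-in-$x$ bound of size $o(H/\log x)$, provided the arithmetic ranges balance out.

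The remaining steps parallel those in the proof of Theorem~\ref{thm_main}: a Buchstab-type decomposition extracts the contribution of $n$ with a prime factor in $((\log x)^{1.09},(\log x)^{1.1}]$, a H\"older splitting reduces the type~II integral to the product of a sparse fourth-power moment for $P_1 P_2$ and a classical fourth-power moment for $P_3$, and the type~I contributions (now with $P_3$ treated as a long, smooth factor of length $\sqrt{x}$) are handled by the same contour argument as in the two-prime case. The main obstacle will be the numerical bookkeeping: one must verify that after the H\"older split, the sparse mean-value bound on $P_1 P_2$ combined with the standard mean-value bound on $P_3$ saves enough against the prefactor $H/\sqrt{x} = (\log x)^{1.55}$ without any outer $x$-averaging. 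Once this balance is checked — which is where the exponent $1.55$ (as opposed to $2.1$) is pinned down — the resulting lower bound for the count is positive for all large enough $x$, establishing Theorem~\ref{thm_all}.
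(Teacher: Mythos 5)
Your overarching idea is the right one and matches the paper's: because the integrand now contains a prime polynomial of length about $\sqrt{x}$, its mean square already supplies the saving $T/(x/h)$ that in Theorem~\ref{thm_main} had to come from averaging over $x$, and the exponent drops from $2.1=1+a$ to $1+a/2=1.55$ with $a=1.1$. The paper does exactly this: it keeps $p_1\sim P_1$ and one medium prime $p_2\sim\sqrt{x/P_1}/2$ as genuine prime sums, peels off $P_2(1+it)$ by Cauchy--Schwarz using only the improved mean value theorem (Lemma~\ref{le:contMVT2}), and is left with $\int|P_1(1+it)|^2|P(1+it)|^2\,\d t$, which is literally the estimate~\eqref{eq:dirpol} of Section~\ref{sec:proof} with $X$ replaced by $\sqrt{xP_1}$.

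However, two of your concrete steps would fail. First, Heath-Brown's sparse mean value theorem applied to $P_1P_2$ ``supported on the sparse set $\{p_1p_2\}$'' gains nothing: that set has density $\asymp 1/(\log x\,\log\log x)$ in its range, so the main term $(|\mathcal{M}|/M)^2$ in Lemma~\ref{le_HB_MVT} is no better than the classical mean value theorem. In the paper the sparse mean value theorem is applied to $M(s)=P_1(s)^k/k!$ with $P_1^k\approx X$, whose support consists of $(\log X)^{1.1}$-smooth numbers and hence has the power-saving density $X^{-1/a+o(1)}$; this is precisely what forces $a>103/94$ and thus pins down $1.55$. Second, you cannot keep all three factors as genuine primes and close the argument with H\"older and fourth moments: a Cauchy--Schwarz of $\int|P_1P_2P_3|$ into $(\int|P_1|^2|P_2|^2)^{1/2}(\int|P_3|^2)^{1/2}$ with mean value bounds falls short of the target $T/(x/h)\cdot(\log x)^{-2-\varepsilon}$ by a factor of about $\log x$, and there is no unconditional pointwise or large-value input for three raw prime polynomials. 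The paper's resolution is to replace one of the two $\sqrt{x}$-sized primes by the Harman-sieve minorant $\rho^-$ built at scale $X=\sqrt{xP_1}$ (with $z=X^{2/11}$) and to rerun the full decomposition of Proposition~\ref{prop_rho} together with the type I (Deshouillers--Iwaniec), type I/II (Watt), and type II (Jutila plus Heath-Brown) estimates of Sections~\ref{sec:minorant}--\ref{sec:proof}; your sketch contains no substitute for this machinery, so as written the proof does not go through.
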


In comparison, for $E_2$ numbers in all intervals, we are not aware of results that would go below the interval length $\asymp x^{0.525}$ known for the primes (and consequently for $E_2$ numbers) by the work of Baker, Harman and Pintz~\cite{baker-harman-pintz}.

As far as we are aware, Theorem~\ref{thm_all} is the first result on $E_3$ numbers in all intervals of length $\sqrt{x}(\log x)^c$. It would be possible to similarly adapt also earlier works on $E_2$ numbers in almost all short intervals, such as~\cite{tera-primes} or \cite{harman-almostprimes}, to produce a result of this shape, but with a larger value of $c$.

\subsection{Proof ideas}
The beginning of our argument follows~\cite{tera-primes} with some simplifications. In particular, we first apply Harman's sieve to find a suitable minorant $\rho^-(n) \leq 1_{\mathbb{P}}(n)$ and then, by a standard application of Perron's formula, reduce matters to mean squares of Dirichlet polynomials. Once we have made this reduction, we need to prove that, for some $\varepsilon > 0$,
\begin{align*}
\int_{X^{1/1000}}^{X/h}|P_1(1+it)|^2|P(1+it)|^2\d t\ll \frac{1}{(\log X)^{2+\varepsilon}},
\end{align*}
where $h = (\log X)^{2.1}$,
\begin{align*}
P_1(s):=\sum_{\substack{p_1 \sim P_1}} \frac{1}{p_1^{s}},\quad P(s):=\sum_{X/(2P_1)\leq n\leq  4X/P_1}\frac{\rho^-(n)}{n^{s}},
\end{align*}
and $P_1 = (\log X)^{1.1}$ (as well as other very similar claims).

Still following~\cite{tera-primes}, we partition $[X^{1/1000}, X/h] = \mathcal{T} \cup \mathcal{U}$ according to the size of $P_1(s)$, with 
\[
\mathcal{T} := \{t \in [X^{1/1000}, X/h] \colon |P_1(1+it)| \leq P_1^{-\varepsilon}\}.
\]
The integral over $\mathcal{T}$ is easily dealt with in the beginning of Section~\ref{sec:proof} --- we use the pointwise bound $|P_1(1+it)| \leq P_1^{-\varepsilon}$ and estimate the mean square of $P(1+it)$ using (an improved) mean value theorem (Lemma~\ref{le:contMVT2} below).

Let us turn to the integral over $\mathcal{U}$. The minorant $\rho^-(n)$ is chosen so that it can be split into appropriate type I, type I/II, and type II sums (see Proposition~\ref{prop_rho}). We deal with type I and type I/II sums in Sections~\ref{subsec:typeI} and~\ref{subsec:typeI/II} in a rather similar manner as in~\cite{tera-primes}, utilizing mean value theorems of Watt~\cite{watt} and Deshouillers--Iwaniec~\cite{DI} (see Lemma~\ref{le:WattDI}).

The most novel part of our argument is the treatment of our type II sums which lead to integrals of the type
\begin{align*}
\int_{\mathcal{U}} |P_1(1+it)|^2|M_1(1+it)|^2 |M_2(1+it)|^2\d t,
\end{align*}
where, for some coefficients $\alpha_m, \beta_n$,
\[
M_1(s) = \sum_{m \sim M_1} \frac{\alpha_m}{m^s} \quad \text{and} \quad M_2(s) = \sum_{n \asymp X/(P_1M_1)} \frac{\beta_n}{n^s} 
\]
with $M_1 \in [X^{\varepsilon/2}, X^{2/11}]$. We further split $\mathcal{U}$ into sets $\mathcal{U}_{\sigma_1, \sigma_2}$, where
\[
\mathcal{U}_{\sigma_1, \sigma_2} := \{t \in \mathcal{U} \colon |M_1(1+it)| \in (M_1^{-\sigma_1}, 2M_1^{-\sigma_1}], |M_2(1+it)| \in (M_2^{-\sigma_2}, 2M_2^{-\sigma_2}]\}.
\]
Now it suffices to show that, for any $\sigma_1, \sigma_2$,
\[
|\mathcal{U}_{\sigma_1, \sigma_2}| \ll_A \frac{M_1^{\sigma_1} M_2^{\sigma_2}}{(\log X)^A}.
\]

When $\sigma_j \leq 49/206 - 10\varepsilon$ for $j = 1$ or $j = 2$, we are able to use Jutila's~\cite{jutila-large-value} large value estimate to obtain a satisfactory bound (see Proposition~\ref{prop_typeII}).

We deal with the case $\sigma_1, \sigma_2 > 49/206 - 10\varepsilon$ in Section~\ref{subsec:typeII}. There (utilizing an idea from~\cite{matomaki-radziwill}) we use the definitions of $\mathcal{U}_{\sigma_1, \sigma_2}$ and $\mathcal{U}$ to see that 
\begin{align*}
|\mathcal{U}_{\sigma_1, \sigma_2}| &\leq M_2^{2\sigma_2} P_1^{2k\varepsilon} \int_{\mathcal{U}_{\sigma_1, \sigma_2}}|P_1(1+it)|^{2k} |M_2(1+it)|^2 \d t \\
&\leq M_2^{2\sigma_2} P_1^{2k\varepsilon} \int_{X^{1/1000}}^{X/h} |P_1(1+it)|^{2k} |M_2(1+it)|^2\d t,
\end{align*}
where we have chosen $k$ so that $P_1^k=X^{1-o(1)}$. Now the coefficients of $P_1(s)^k$ are supported on $P_1 = (\log X)^{1.1}$-smooth numbers, so they have a very sparse support (of size $X^{1-1/1.1+o(1)}$ by standard estimates on smooth numbers). At this point, we invoke a mean value theorem for sparse Dirichlet polynomials proven recently by Heath-Brown~\cite{hb-smooth} (see Lemma~\ref{le_HB_MVT} below). This leads to a satisfactory bound unless $M_1 \in [X^{103/594}, X^{2/11}]$ and $\sigma_1 > 1/2-(\log X)/(22(\log M_1))$ (see~\eqref{eq:thsig1} with $a = 1.1$ and $\theta=(\log M_1)/(\log X)$). In the remaining range we argue similarly but use
\begin{align*}
|\mathcal{U}_{\sigma_1, \sigma_2}| &\leq M_1^{10 \sigma_1} P_1^{2k\varepsilon} \int_{\mathcal{U}_{\sigma_1, \sigma_2}}|P_1(1+it)|^{2k} |M_1(1+it)|^{10} \d t.
\end{align*}
\subsection{Acknowledgments}

KM was supported by Academy of Finland grant no. 285894. JT was supported by Academy of Finland grant no. 340098. We thank the referee for numerous helpful suggestions. 

\subsection{Notation}
We use the usual asymptotic notation $\ll, \gg, \asymp, O(\cdot), o(\cdot)$ and use $n\sim y$ as a shorthand for $y<n\leq 2y$. The letters $p, q,$ and $p_j, q_j$ will always denote primes.

For a claim $A$, we write $1_A$ for its indicator function and for a set $A$ we write $1_A(n) = 1_{n \in A}$. For $z \geq 2$, we write $P(z):=\prod_{p<z}p$ and $\rho(n,z) = 1_{(n, P(z)) = 1}$. In particular, Buchstab's identity states that, for any $z > w \geq 2$, we have
\begin{equation}
\label{eq:Buchstab}
\rho(n, z) = \rho(n, w) - \sum_{\substack{n = qm \\ w \leq q < z}} \rho(n/q, q).
\end{equation}

We denote by $\mu$ the M\"obius function and by $d_k$ the $k$-fold divisor function, and denote $d_2(n)$ simply by $d(n)$. We will use occasionally the fact that $d_k$ satisfies the submultiplicativity property $d_k(mn)\leq d_k(m)d_k(n)$ for all $m,n\in \mathbb{N}$. We say that a sequence $(\alpha(n))_{n\sim N}$ is \emph{divisor-bounded} if $|\alpha(n)|\ll d(n)^{B}$ for some fixed $B$. Note that if $(\alpha(n))_{n\sim M}$ and $(\beta(n))_{n\sim N}$ are divisor-bounded, then $(\alpha*\beta(n))_{n\asymp MN}$ is clearly also divisor-bounded. 

For any multiplicative function $f \colon \mathbb{N} \to [1, \infty)$ we have, by writing $f = 1 \ast g \iff g = f \ast \mu \geq 0$, the elementary upper bound
\[
\sum_{n \leq x} f(n) = \sum_{m \leq x} g(m)\left(\frac{x}{m} + O(1)\right) \ll x \prod_{p \leq x} \left(1+\frac{g(p)}{p} + \frac{g(p^2)}{p^2} + \dotsb\right).
\]
In particular this together with Mertens' formula implies that, for any fixed $j, k , c, d \geq 1$,
\begin{equation}
\label{eq:divbound}
\sum_{n \leq X} d_{j}(n)^{c} d_k(n)^d \ll \prod_{p \leq X} \left(1+\frac{j^c k^d - 1}{p} + O\left(\frac{1}{p^{3/2}}\right)\right) \ll X (\log X)^{j^c \cdot k^d-1}.
\end{equation}

\section{The minorant function}
\label{sec:minorant}
In this section we first construct our minorant function $\rho^-(n) \leq 1_{\mathbb{P}}(n)$ using Harman's sieve method~\cite{harman-sieves}. Then in Subsection~\ref{ssec:minave} we show that it has positive average over long intervals and in Subsection~\ref{ssec:mintypeIII} we show that it can be decomposed into appropriate type I, type I/II, and type II sums.

For the construction, recall that $\rho(n, z) = 1_{(n, P(z)) = 1}$. Let $n \in [2X^{1/2}, 3X]$, $z = X^{2/11}$, and let $\varepsilon>0$ be small. Applying Buchstab's identity~\eqref{eq:Buchstab} twice we obtain
\begin{align}
\begin{split}
\label{eq:Buchstab1}
1_{n \in \mathbb{P}} &= \rho(n, 2X^{1/2}) = \rho(n, z) - \sum_{\substack{n = qm \\ z \leq q < 2X^{1/2}}} \rho(m, z) \\
&\quad +\sum_{\substack{n = q_1 q_2 m \\ z \leq q_2 < q_1 < X^{1/4-2\varepsilon} \\ q_1 q_2^4 < X^{1-2\varepsilon}}} \rho(m, q_2) + \sum_{\substack{n = q_1 q_2 m \\ z \leq q_2 < q_1 < 2X^{1/2} \\ q_1 \geq X^{1/4-2\varepsilon} \text{ or } q_1 q_2^4 \geq X^{1-2\varepsilon}}} \rho(m, q_2).
\end{split}
\end{align}
Applying Buchstab's identity twice more, the third term on the right-hand side equals
\begin{equation}
\label{eq:AfterSecondBuch}
\sum_{\substack{n = q_1 q_2 m \\ z \leq q_2 < q_1 < X^{1/4-2\varepsilon} \\ q_1 q_2^4 < X^{1-2\varepsilon}}} \rho(m, z) - \sum_{\substack{n = q_1 q_2 q_3 m \\ z \leq q_3 < q_2 < q_1 < X^{1/4-2\varepsilon} \\ q_1 q_2^4 < X^{1-2\varepsilon}}} \rho(m, z) + \sum_{\substack{n = q_1 q_2 q_3 q_4 m \\ z \leq q_4 < q_3 < q_2 < q_1 < X^{1/4-2\varepsilon} \\ q_1 q_2^4 < X^{1-2\varepsilon}}} \rho(m, q_4).
\end{equation}
We define our minorant for $\rho(n, 2X^{1/2})$ by discarding the last term here as well as the last term on the right-hand side of~\eqref{eq:Buchstab1} (both terms are nonnegative, so they can be discarded when we look for a minorant), and thus choose
\begin{align}
\label{eq:rho-def}
\rho^-(n) &:= \rho(n, z) - \sum_{\substack{n = qm \\ z \leq q < 2X^{1/2}}} \rho(m, z) +  \sum_{\substack{n = q_1 q_2 m \\ z \leq q_2 < q_1 < X^{1/4-2\varepsilon} \\ q_1 q_2^4 < X^{1-2\varepsilon}}} \rho(m, z) - \sum_{\substack{n = q_1 q_2 q_3 m \\ z \leq q_3 < q_2 < q_1 < X^{1/4-2\varepsilon} \\ q_1 q_2^4 < X^{1-2\varepsilon}}} \rho(m, z).
\end{align}
We note here for later use that, since $n$ has $\leq (\log (3X))/(\log z)$ prime factors that are $\geq z$, we have the bound
\begin{align}\label{eq:rhobound}
|\rho^{-}(n)|\leq 4 \left(\frac{\log (3X)}{\log z}\right)^3\rho(n,z) \ll \rho(n, z).
\end{align}

Theorem~\ref{thm_main} will follow from the following variance estimate in short intervals.
\begin{theorem}\label{thm_minorant}
Let $\varepsilon > 0$ be sufficiently small, let $X\geq 3, h=(\log X)^{c}$ with $c=2.1$, and $h_1=X^{99/100}$. Let
\begin{equation}
\label{eq:aP1def}
a \in [c-1-1/10000, c-1], \quad \text{and} \quad P_1 = (\log X)^a. 
\end{equation}

The function $\rho^-(n)$ defined in~\eqref{eq:rho-def} with $z = X^{2/11}$ satisfies the following three conditions.
\begin{enumerate}[(i)]
	\item For every $n \in [2X^{1/2}, 3X]$, we have
\begin{align*}
\rho^-(n)\leq 1_{\mathbb{P}}(n).    
\end{align*}    
    \item Once $X$ is large enough we have, for all $x \in (X, 2X]$,
    \begin{align*}
    \sum_{\substack{x < p_1 n\leq x+h_1\\ p_1 \sim P_1}}\rho^-(n)\geq \frac{h_1}{200\log P_1 \log X}.    
    \end{align*}
\item We have
    \begin{align}
    \label{eq:minMSbound}
    \frac{1}{X}\int_{X}^{2X}\left|\frac{1}{h}\sum_{\substack{x < p_1 n\leq x+h\\ p_1 \sim P_1}}\rho^-(n)-\frac{1}{h_1}\sum_{\substack{x < p_1 n\leq x+h_1\\p_1 \sim P_1}}\rho^-(n)\right|^2 \d x \ll\frac{1}{(\log X)^{2+\varepsilon}}.
    \end{align}
\end{enumerate}
\end{theorem}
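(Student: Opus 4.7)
Parts (i) and (ii) are relatively soft. For (i), the key observation is that for $n\in[2X^{1/2},3X]$ we have the identity $\rho(n,2X^{1/2})=1_{\mathbb{P}}(n)$: any composite $n\leq 3X$ all of whose prime factors are $\geq 2X^{1/2}$ would satisfy $n\geq(2X^{1/2})^2=4X$, which is impossible. Thus \eqref{eq:Buchstab1} combined with \eqref{eq:AfterSecondBuch} gives an \emph{exact} Buchstab identity for $1_{\mathbb{P}}(n)$, and the two summands discarded from this identity to form $\rho^-$ are pointwise non-negative (each containing a factor $\rho(m,\cdot)\geq 0$), which yields $\rho^-(n)\leq 1_{\mathbb{P}}(n)$. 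For (ii), since $h_1=X^{99/100}$ is long, each of the four summands defining $\rho^-$ admits a standard asymptotic evaluation: PNT in short intervals gives the $p_1$-sum and the classical fundamental lemma of sieve theory (or Bombieri--Vinogradov-type estimates) gives the inner sum, with a main term of size $h_1/((\log P_1)(\log X))$. The main term is a linear combination of iterated Buchstab-function integrals over the prescribed simplices, and the truncations $q_1<X^{1/4-2\varepsilon}$ and $q_1q_2^4<X^{1-2\varepsilon}$ keep the second and third sieve integrals small enough that their total contribution leaves a positive proportion of the prime density, yielding (ii) with constant $1/200$ once $\varepsilon$ is sufficiently small.

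Part (iii) is the substantive statement. Following~\cite{tera-primes}, I would first expand the left-hand side of~\eqref{eq:minMSbound} via Perron's formula and a standard $L^2$ reduction (Parseval on vertical lines with a truncated Mellin-type weight), reducing matters to a bound of the form
\[
\int_{X^{1/1000}}^{X/h}|P_1(1+it)|^2|P(1+it)|^2\,dt\ll(\log X)^{-2-\varepsilon}
\]
together with an acceptable contribution from $|t|\leq X^{1/1000}$ handled by the long-interval asymptotics from (ii). One then partitions $[X^{1/1000},X/h]=\mathcal{T}\cup\mathcal{U}$ according to whether $|P_1(1+it)|\leq P_1^{-\varepsilon}$. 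The integral over $\mathcal{T}$ is immediate: combine the pointwise bound on $P_1$ with the improved mean value theorem (Lemma~\ref{le:contMVT2}) applied to $|P|^2$.

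On $\mathcal{U}$ I would decompose $\rho^-$ via Proposition~\ref{prop_rho} into type I, type I/II, and type II pieces. Type I and type I/II sums are handled as in~\cite{tera-primes} using Watt's and Deshouillers--Iwaniec's moment bounds (Lemma~\ref{le:WattDI}). The type II piece factors as $P_1 M_1 M_2$ with $M_1\in[X^{\varepsilon/2},X^{2/11}]$, and I would further partition $\mathcal{U}$ by the joint size of $|M_j(1+it)|$ into level sets $\mathcal{U}_{\sigma_1,\sigma_2}$. For $\min(\sigma_1,\sigma_2)\leq 49/206-10\varepsilon$, Jutila's large value estimate suffices. The chief obstacle is the remaining regime where both $\sigma_j$ sit close to the ``square-root barrier'' $1/2$: here I would apply the key new idea, namely raise $|P_1|^2$ to a high even power $|P_1|^{2k}$ with $P_1^k=X^{1-o(1)}$, exploit that the coefficients of $P_1(s)^k$ are supported on $(\log X)^{1.1}$-smooth numbers of density $X^{1-1/1.1+o(1)}$, and invoke Heath-Brown's sparse mean value theorem (Lemma~\ref{le_HB_MVT}). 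A parallel application in which $|M_2|^2$ is replaced by $|M_1|^{10}$ then closes the residual range $M_1\in[X^{103/594},X^{2/11}]$ with $\sigma_1>1/2-(\log X)/(22\log M_1)$. Arranging the numerics so that every admissible triple $(M_1,\sigma_1,\sigma_2)$ falls into one of these cases is the crux of the argument.
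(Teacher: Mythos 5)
Your argument for (i) is exactly the paper's: $\rho(n,2X^{1/2})=1_{\mathbb{P}}(n)$ on $[2X^{1/2},3X]$, and the two terms discarded from the iterated Buchstab identity are pointwise non-negative. Your outline of (iii) also follows the paper's route step for step (the reduction to $\int|P_1|^2|P|^2$, the splitting $\mathcal{T}\cup\mathcal{U}$, Proposition~\ref{prop_rho}, Watt and Deshouillers--Iwaniec for the type I and type I/II pieces, Jutila's large value estimate for $\min(\sigma_1,\sigma_2)\leq 49/206-10\varepsilon$, and the two applications of Heath-Brown's sparse mean value theorem after amplifying by $|P_1(1+it)|^{2k}$ with $P_1^k=X^{1-o(1)}$), so it is the same approach, presented as an outline rather than a worked proof.

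The one genuine gap is in (ii). You assert that the truncations $q_1<X^{1/4-2\varepsilon}$ and $q_1q_2^4<X^{1-2\varepsilon}$ ``keep the \dots\ sieve integrals small enough that their total contribution leaves a positive proportion of the prime density,'' but that is precisely the claim to be verified, and it is not automatic: it is the design constraint that dictates the whole shape of~\eqref{eq:rho-def}, and it would fail for a carelessly chosen decomposition. Writing $\sum_{y<n\leq y+h_2}\rho^-(n)=\frac{h_2}{\log y}\bigl(1-I_2(\varepsilon)-I_4(\varepsilon)+o(1)\bigr)$, where $I_2$ and $I_4$ are the Buchstab integrals attached to the two \emph{discarded} terms (not to the terms retained in $\rho^-$, as your phrasing suggests), one must prove $I_2+I_4<1$ with a quantitative margin. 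The paper bounds $I_4(\varepsilon)<0.0004$ by a crude volume estimate, using that its integrand vanishes unless $\alpha_2\leq 1/5$, and establishes $I_2(0)\leq 0.99$ by a numerical evaluation of the two-dimensional integral of $\omega\bigl(\tfrac{1-\alpha_1-\alpha_2}{\alpha_2}\bigr)\alpha_1^{-1}\alpha_2^{-2}$ over the region $2/11\leq\alpha_2<\alpha_1<1/2$ with $\alpha_1\geq 1/4$ or $\alpha_1+4\alpha_2\geq 1$. Without this computation (or a rigorous substitute) the positivity, and hence the explicit constant $1/200$, is unsupported; this numerical check is the entire content of part (ii).
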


Note that property (i) is immediate from the construction above. We also remark that a bound of $\ll 1/(\log X \log P_1)^2$ for the left-hand side of~\eqref{eq:minMSbound} would be easy to prove; crucially, we must beat this bound. 

Let us first see how Theorem~\ref{thm_minorant} implies Theorem~\ref{thm_main}.

\begin{proof}[Proof of Theorem~\ref{thm_main} assuming Theorem~\ref{thm_minorant}] 
Let $\rho^-(n)$, $c$ and $\varepsilon$ be as in Theorem~\ref{thm_minorant}. Let $b_1=c-1-1/10000$, $b_2=c-1-1/20000$. Summing over different choices of $P_1$, Theorem~\ref{thm_minorant}(ii) implies that there is a  constant $\gamma>0$ such that 
  \begin{align}
  \label{eq:rho-low}
    \frac{1}{h_1}\sum_{\substack{x < p_1n\leq x+h_1 \\ (\log X)^{b_1} < p_1\leq (\log X)^{b_2}}}\rho^-(n)\geq  \frac{\gamma}{\log X}  
  \end{align}
    for all $x\in (X, 2X]$. On the other hand, Theorem~\ref{thm_minorant}(ii) and the Cauchy--Schwarz inequality imply that 
    \begin{align}
    \label{eq:minMSbound2}
   \int_{X}^{2X}\left|\frac{1}{h}\sum_{\substack{x < p_1 n\leq x+h\\ (\log X)^{b_1}<p_1\leq (\log X)^{b_2}}}\rho^-(n)-\frac{1}{h_1}\sum_{\substack{x < p_1 n\leq x+h_1\\(\log X)^{b_1}<p_1\leq (\log X)^{b_2}}}\rho^-(n)\right|^2 \d x \ll\frac{X}{(\log X)^{2+\varepsilon/2}}.
    \end{align}

    Define the exceptional set
    \begin{align*}
    \mathcal{X}:=\left\{x\geq 2 \colon\, \frac{1}{(\log x)^c}\sum_{\substack{x < p_1n\leq x+(\log x)^c\\(\log x)^{c-1-1/1000}<p_1\leq (\log x)^{c-1}}}1_{\mathbb{P}}(n)< \frac{\gamma}{2(\log x)}\right\}\cap \mathbb{N}. \end{align*}
Using the inequality $1_{\mathbb{P}}(n)\geq \rho^{-}(n)$ and combining~\eqref{eq:rho-low} and~\eqref{eq:minMSbound2}, we see that 
 \begin{align*}
    |\mathcal{X}\cap (X,2X]| \ll \frac{X}{(\log X)^{\varepsilon/2}}.
    \end{align*}
     The claim now follows by summing over dyadic intervals.
\end{proof}

\subsection{Proof of Theorem~\ref{thm_minorant}(ii)}
\label{ssec:minave}
Let us now show that the minorant $\rho^{-}$ satisfies condition (ii) of Theorem~\ref{thm_minorant}. Let $x \in (X, 2X]$ and let $p_1\sim P_1$ be a prime. Write $y = x/p_1$ and $h_2 = h_1/p_1$.

Recall that $\rho^-$ was constructed by discarding the last terms in~\eqref{eq:Buchstab1} and~\eqref{eq:AfterSecondBuch}. Hence
\begin{align*}
\sum_{y < n \leq y + h_2} \rho^-(n) = \sum_{y < q \leq y+h_2} 1 - \sum_{\substack{y < q_1 q_2 m \leq y+h_2 \\ z \leq q_2 < q_1 < 2X^{1/2} \\ q_1 \geq X^{1/4-2\varepsilon} \text{ or } q_1 q_2^4 \geq X^{1-2\varepsilon}}} \rho(m, q_2) - \sum_{\substack{y < q_1 q_2 q_3 q_4 m \leq y+h_2 \\ z \leq q_4 < q_3 < q_2 < q_1 < X^{1/4-2\varepsilon} \\ q_1 q_2^4 < X^{1-2\varepsilon}}} \rho(m, q_4).
\end{align*}
These sums can be transformed into integrals involving Buchstab's function $\omega$ (defined by $\omega(u)=1/u$ for $1\leq u\leq 2$ and extended by the delay differential equation $\frac{\d}{\d u}(u\omega(u))=\omega(u-1)$ for $u\geq 2$) using the prime number theorem (see e.g.~\cite[Lemma 16]{tera-primes} or~\cite[Section 1.4]{harman-sieves} --- the fact that we work over an interval of length $h_2 = X^{99/100}/p_1$ makes no difference since the prime number theorem in short intervals is applicable), and we obtain
\begin{align*}
&\sum_{y < n \leq y + h_2} \rho^-(n) = \frac{h_2}{\log y} \left(1-\int_{2/11}^{1/2} \int_{2/11}^{\alpha_1} 1_{\alpha_1 \geq 1/4-2\varepsilon \text{ or } \alpha_1+4\alpha_2\geq 1-2\varepsilon} \frac{\omega\left(\frac{1-\alpha_1-\alpha_2}{\alpha_2}\right)}{\alpha_1 \alpha_2^2}\d \alpha_2\d \alpha_1\right.\\
&\left.-\int_{2/11}^{1/4-2\varepsilon} \int_{2/11}^{\alpha_1} \int_{2/11}^{\alpha_2} \int_{2/11}^{\alpha_3}1_{\alpha_1+4\alpha_2 \leq 1-2\varepsilon}\frac{\omega\left(\frac{1-\alpha_1-\alpha_2-\alpha_3-\alpha_4}{\alpha_4}\right)}{\alpha_1\alpha_2\alpha_3\alpha_4^2}\d \alpha_4\d \alpha_3\d \alpha_2\d \alpha_1 + o(1)\right) \\
&=: \frac{h_2}{\log y}(1-I_2(\varepsilon)-I_4(\varepsilon)+o(1)),
\end{align*}
say. Note that the integrand in $I_4(\varepsilon)$ vanishes unless $\alpha_2 \leq 1/5$. Hence
\[
I_4(\varepsilon) \leq \left(\frac{11}{2}\right)^5 \int_{2/11}^{1/4} \int_{2/11}^{1/5} \int_{2/11}^{\alpha_2} \int_{2/11}^{\alpha_3} \d \alpha_4\d \alpha_3\d \alpha_2\d \alpha_1 \leq \left(\frac{11}{2}\right)^5 \left(\frac{1}{4}-\frac{2}{11}\right) \frac{\left(\frac{1}{5}-\frac{2}{11}\right)^3}{3!} < 0.0004.
\] 
Furthermore, a numerical calculation\footnote{The Mathematica code for computing the integral can be found along with the arXiv submission of this paper.} shows that $I_2(0)\leq 0.99$, so that, taking $\varepsilon>0$ small enough (and using continuity in $\varepsilon$) we have for any $x \sim X$ with $X$ large,
\begin{align*}
    \sum_{\substack{x < p_1n\leq x+h_1\\ p_1 \sim P_1}}\rho^-(n)\geq 0.009h_1 \sum_{p_1 \sim P_1} \frac{1}{p_1\log(X/p_1)} \geq \frac{h_1}{200\log P_1 \log X}, 
\end{align*}
where we used Mertens' theorem to obtain the last inequality.

\subsection{Reduction to type I, type I/II, and type II sums}
\label{ssec:mintypeIII}
Most of the rest of the paper is devoted to showing that the function $\rho^-$ constructed above satisfies Theorem~\ref{thm_minorant}(iii). We fix once and for all
\begin{align}\label{eq:delta}
\delta_A := (\log X)^{-10A}
\end{align}
and shall, for technical reasons, restrict many variables into $\delta_A$-adic intervals $(N, (1+\delta_A)N]$. The following proposition gives a decomposition of $\rho^-$ into convenient type I, type I/II, and type II terms.

\begin{proposition}[Decomposition of the minorant]\label{prop_rho} Let $\varepsilon > 0$ be fixed and small enough, and let $A \geq 5$. Let $X \geq 3,$ $z_0=\exp((\log X)/(\log \log X)^3)$, and $z = X^{2/11}$. Let $\rho^-$ be as in~\eqref{eq:rho-def}. Let $Y \in (X^{1-\varepsilon/100}, X/2]$. Then there exists a set $\mathcal{F}$ (depending on $X$ and $Y$) consisting of  $O(\exp((\log \log X)^5))$ functions $f \colon \mathbb{N} \to \mathbb{C}$ such that
\begin{align*}
\rho^-(n)1_{n \in (Y/2, 4Y]} = \sum_{f \in \mathcal{F}} f(n) + c_n,
\end{align*}
where $c_n$ are supported on $(Y/4, 8Y]$ and satisfy
\begin{equation}
\label{eq:cnbound}
\sum_{n} |c_n|^2 \ll_A \frac{Y}{\log^{A} X}
\end{equation}

Furthermore, for each $f \in \mathcal{F}$ one of the following holds for some divisor-bounded coefficients $\alpha_m, \beta_m$:
\begin{enumerate}[(i)]
\item (Type I case)
\begin{align*}
f(n) = \sum_{\substack{n=m_1m_2\\ m_1 \sim M_1 \\ M_2 < m_2 \leq (1+\delta_A) M_2}}\alpha_{m_1}  
\end{align*}
with $M_1 \leq X^{1/2+\varepsilon}$ and $M_1 M_2 \in (Y/2, 4Y]$.
\item (Type I/II case)
\begin{align*}
f(n) = \sum_{\substack{n=m_1m_2m_3\\ m_1 \sim M_1,\, m_2 \sim M_2 \\ M_3 < m_3 \leq (1+\delta_A) M_3}} \alpha_{m_1} \beta_{m_2}  
\end{align*}
with
\begin{align*}
M_1^2M_2\leq X^{1-\varepsilon},\quad M_2\leq X^{1/4-\varepsilon}, \quad \text{and} \quad M_1 M_2 M_3 \in (Y/2, 4Y].
\end{align*}
\item (Type II case) For some $R \in \{1, \dotsc, \lfloor \frac{\log z}{\log z_0}\rfloor\}$,
\begin{align*}
f(n) = \sum_{\substack{n=m_1m_2\\M_1 < m_1 \leq (1+\delta_A)^{R} M_1 \\ m_2\sim M_2}} \alpha_{m_1} \beta_{m_2}  
\end{align*}
with
\begin{align*}
X^{\varepsilon/2} \leq M_1\leq z, \quad M_1 M_2 \in (Y/2, 4Y],
\end{align*}
and
\begin{equation}
\label{eq:alm1def}
\alpha_m = \sum_{\substack{m = q_1 \dotsm q_R \\ Q_j < q_j \leq Q_j(1+\delta_A)}} 1,
\end{equation}
where $Q_j \in [z_0, z)$ and $Q_1 \dotsm Q_R = M_1$.
\end{enumerate}
\end{proposition}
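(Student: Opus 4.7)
The plan is to treat each of the four summands in \eqref{eq:rho-def} separately. Each has the shape $\pm\sum_{n = q_1\cdots q_r m} \rho(m, z)$ with $r \in \{0,1,2,3\}$ and prescribed constraints on the explicit primes $q_j$. For each summand, I first perform a $\delta_A$-adic decomposition of every explicit $q_j$, pinning each to a window $(Q_j, (1+\delta_A)Q_j]$ with $Q_j$ on a discrete grid; this introduces only $(\log X/\delta_A)^{O(1)}$ pieces. I also sharpen the support of $n$ from the initial interval $[2X^{1/2},3X]$ down to $(Y/2, 4Y]$, absorbing the boundary discrepancy into the error term $c_n$, where the contribution is bounded by \eqref{eq:divbound}.

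Next, I decompose the remaining sifted factor $\rho(m, z)$ by iterating Buchstab's identity \eqref{eq:Buchstab}, peeling off prime factors of $m$ one at a time and placing each extracted prime $q_{r+j}$ in its own $\delta_A$-adic window $(W_j, (1+\delta_A)W_j]$ with $W_j \in [z_0, z)$. Since each extracted prime is $\geq z_0$ and $m \leq X$, on each branch the iteration runs at most $\log X/\log z_0 = (\log\log X)^{3}$ times, and each step spawns only polynomially many subpieces, for a total of $\exp((\log\log X)^{3+o(1)})$ pieces, comfortably within the budget $\exp((\log\log X)^{5})$.

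At each stage of the iteration and on each branch, I examine the accumulated product $m_1$ of primes extracted so far together with the complementary factor $m_2 = n/m_1$ (which may still contain an unresolved $\rho(\cdot, z)$-factor). I classify the resulting term as follows: Type I if the complementary factor $m_2$ is forced into a $\delta_A$-adic window and $m_1 \leq X^{1/2+\varepsilon}$; Type I/II if $m_2$ admits a further factorisation $m_2 = m_2' m_2''$ with $m_2''$ in a $\delta_A$-adic window and the sizes obey $M_1^2 M_2 \leq X^{1-\varepsilon}$ and $M_2 \leq X^{1/4-\varepsilon}$; Type II if $m_1 \in [X^{\varepsilon/2}, z]$, in which case $\alpha_{m_1}$ is precisely the convolution of the $\delta_A$-window indicators, matching \eqref{eq:alm1def} with $Q_j = W_j$ and $R$ the number of extractions. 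As soon as a branch terminates in one of these three categories, the corresponding $f$ is placed in $\mathcal{F}$; otherwise the iteration continues.

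The main obstacle is to verify that every branch terminates in a classifiable way, so that the residual unclassified terms collected in $c_n$ obey the $L^2$ bound \eqref{eq:cnbound} via \eqref{eq:divbound}. Crucially, the explicit constraints $q_1 < X^{1/4-2\varepsilon}$ and $q_1 q_2^4 < X^{1-2\varepsilon}$ built into \eqref{eq:rho-def} are designed precisely so that, after sufficient Buchstab extraction, one can always either combine a subset of the extracted primes into a product in $[X^{\varepsilon/2}, z]$ (giving Type II), or exhibit a factor of size $\leq X^{1/2+\varepsilon}$ paired with a suitably constrained complement (giving Type I or I/II). A case-by-case analysis on the relative sizes of the extracted $q_j$, in the spirit of the role-based arguments of Harman's sieve, confirms that every factorisation reaches one of the three categories.
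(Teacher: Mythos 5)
There is a genuine gap: your decomposition never resolves the residual sieve below $z_0$. You iterate Buchstab only over primes in $[z_0,z)$ (this is what makes your branch count $\exp((\log\log X)^{3+o(1)})$ work), so every terminating branch still carries an unresolved factor $\rho(k,z_0)$ (or $\rho(k,q)$ with $q\geq z_0$) on the complementary variable. Such a branch cannot be placed in the Type I or Type I/II classes of the Proposition, because there the long variable $m_2$ (resp.\ $m_3$) must have coefficient identically $1$ over a $\delta_A$-adic window --- this is essential later, since Lemma~\ref{le:WattDI} applies only to the untwisted polynomial $\sum_{N<n\leq N'}n^{-s}$. Nor can it be salvaged as Type II when the Buchstab-extracted primes have total product $<X^{\varepsilon/2}$ (e.g.\ the branch of $\rho(n,z)$ where the fundamental-lemma term dominates), since then no variable of size in $[X^{\varepsilon/2},z]$ of the prescribed form~\eqref{eq:alm1def} is available. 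The paper closes exactly this gap with a fundamental-lemma step (Lemma~\ref{le:HarmanSieve}): before each Buchstab extraction one replaces $\rho(k,z_0)$ by the truncated M\"obius sum $\sum_{e\mid(k,P(z_0)),\,e\leq D}\mu(e)$ with $D=\exp((\log X)/\log\log X)$, at the cost of an error whose weighted $L^1$ norm is $\ll Y(\log X)^{-3A}$ by Rankin's trick. This converts the sifted variable into a genuinely free variable $k'$ (coefficient $1$) times a short M\"obius-weighted variable $e\leq D=X^{o(1)}$ that is absorbed into $\alpha_{m_1}$. Without this (or an equivalent device), your Type I branches do not have the required form and the proof does not go through.

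A secondary point: the assertion that ``a case-by-case analysis confirms that every factorisation reaches one of the three categories'' is precisely where the constraints $q_1<X^{1/4-2\varepsilon}$ and $q_1q_2^4<X^{1-2\varepsilon}$ must be used quantitatively. In the paper this is short once the fundamental lemma is in place: if the extracted primes multiply to less than $X^{\varepsilon/2}$ then everything except the free variable $k'$ has size at most $D\cdot Q_1Q_2\cdot X^{\varepsilon/2}\leq X^{1/2+\varepsilon}$ (Type I), and otherwise a greedy subproduct of the extracted primes lands in $[X^{\varepsilon/2},z]$ (Type II); the four-prime term needs the separate verification of $M_1^2M_2\leq X^{1-\varepsilon}$ and $M_2\leq X^{1/4-\varepsilon}$ from $q_1q_2^4<X^{1-2\varepsilon}$, which your sketch does not carry out.
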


Proposition~\ref{prop_rho} will quickly follow from the following lemma.
\begin{lemma}
\label{le:HarmanSieve}
Let $A \geq 5$. Let $X \geq 3,$ $z_0=\exp((\log X)/(\log \log X)^3)$, $z_0<z_1\leq X^{1/3}$, and $D = \exp((\log X)/(\log \log X))$. 
\begin{enumerate}[(i)] 
\item Let $\alpha_m$ be bounded. Then, for any $n \in (X^{99/100}, 4X]$, we have
\begin{align}
\label{eq:HarmanDecomposition}
\sum_{n = mk} \alpha_m \rho(k, z_1) = \sum_{\substack{n = emk \\ e \mid P(z_0),\, e \leq D}} \alpha_m \mu(e) - \sum_{\substack{Q = (1+\delta_A)^j \\ z_0 \leq Q < z_1}} \sum_{\substack{n = qmk \\ Q < q \leq Q (1+\delta_A)}} \alpha_m \rho(k, Q) + c_n,
\end{align}
where $c_n$ are such that, for any $Y \in (X^{99/100}, 4X]$, one has
\begin{equation*}
\sum_{n \sim Y} |c_n|^2 \ll_A \frac{Y}{\log^{3A} X}.
\end{equation*}
\item Let $\alpha_m$ be bounded and supported on $(m, P(z_1)) = 1$, and let $L = \lfloor \frac{\log (4X)}{\log z_0} \rfloor$.  Then, for any $n \in (X^{99/100}, 4X]$, we have
\begin{align*}
\sum_{n = mk} \alpha_m \rho(k, z_1) &= \sum_{\ell = 0}^L (-1)^\ell \sum_{\substack{j_\ell < \dotsb < j_1 \\ Q_u = (1+\delta_A)^{j_u} \\ z_0 \leq Q_\ell < \dotsb < Q_1 < z_1}}\,\, \sum_{\substack{n = emk q_1 \dotsm q_\ell \\ e \mid P(z_0),\, e \leq D \\ Q_u < q_u \leq (1+\delta_A)Q_u}} \alpha_{m} \mu(e) + c_n,
\end{align*}
where $c_n$ are such that, for any $Y \in (X^{99/100}, 4X]$, one has
\[
\sum_{n \sim Y} |c_n|^2 \ll_A \frac{Y}{\log^{2A} X}.
\]
\end{enumerate}
\end{lemma}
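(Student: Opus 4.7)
The plan is to derive part (i) from a single Buchstab step combined with M\"obius inversion and truncation, and part (ii) by iterating (i) until the Buchstab residual terminates; in both parts the only nontrivial estimate reduces to Rankin's trick on $z_0$-smooth integers.

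For part (i), apply Buchstab's identity~\eqref{eq:Buchstab} with $w=z_0$ to write
\[
\rho(k,z_1) \;=\; \rho(k,z_0) \;-\; \sum_{\substack{z_0\leq q<z_1\\ q\mid k}}\rho(k/q,q).
\]
Expand $\rho(k,z_0) = \sum_{e\mid(k,P(z_0))}\mu(e)$ by M\"obius inversion, truncate to $e\leq D$, and partition the primes $q$ into $\delta_A$-adic boxes $(Q,Q(1+\delta_A)]$ with $Q=(1+\delta_A)^j\in[z_0,z_1)$; simultaneously replace $\rho(k/q,q)$ by $\rho(k/q,Q)$ at a negligible boundary cost. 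This yields~\eqref{eq:HarmanDecomposition}, with $c_n$ the sum of (a) the smooth tail $T_n := \sum_{n=emk,\,e\mid P(z_0),\,e>D}\alpha_m\mu(e)$, and (b) partition and $q\to Q$ boundary mismatches of relative size $O(\delta_A)=O((\log X)^{-10A})$, which are trivially within budget. For (a), using $|\alpha_m|\ll 1$ gives the pointwise bound $|T_n|\leq\sum_{e\mid n,\,e\mid P(z_0),\,e>D}d(n/e)$, and expanding the square and summing over $n\sim Y$ yields
\[
\sum_{n\sim Y}|T_n|^2 \;\ll\; Y(\log Y)^{O(1)}\sum_{\substack{e\mid P(z_0)\\ e>D}}\frac{d(e)}{e}.
\]
Rankin's trick with $\sigma=1/\log z_0$ bounds the last sum by $D^{-\sigma}\prod_{p<z_0}(1+2p^{\sigma-1})\ll\exp(-(\log\log X)^2+O(\log\log X))$, beating any fixed power of $1/\log X$, in particular $(\log X)^{-3A}$.

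For part (ii), iterate (i). Each application of (i) to the current residual $\sum\alpha'_m\rho(k,z')$ produces its level-$0$ M\"obius main term plus a new residual of the same shape over $Q<z'$, with the outermost prime $q$ absorbed into a still-bounded coefficient $\alpha'_m\cdot 1_{Q<q\leq Q(1+\delta_A)}$. The iteration terminates at step $L+1$ because the residual then carries $q_1\cdots q_{L+1}>z_0^{L+1}>4X$, incompatible with $n\in(X^{99/100},4X]$. Re-collecting the M\"obius main terms yields the displayed identity. The crucial observation for the error bound is that the total truncation error across all $L+1\ll(\log\log X)^3$ levels telescopes to the closed form
\[
\sum_{\substack{e\mid P(z_0)\\ e>D}}\mu(e)\,B(n/e),\qquad B(n') \;:=\; \sum_{\substack{n'=mk\\ (k,R)=1}}\alpha_m, \qquad R:=P(z_1)/P(z_0).
\]
This is obtained by factoring $e$ out of the iterated M\"obius expansion: for fixed $e$, the remaining alternating sum over strictly decreasing prime tuples $q_1>\cdots>q_\ell$ in $[z_0,z_1)$ collapses by M\"obius cancellation on squarefree divisors of $R$ to the indicator $1_{(k,R)=1}$. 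Since $|B(n')|\leq d(n')$ (up to a constant), the same Rankin computation as in (i) bounds the $L^2$-norm of the truncation error by $\ll\sqrt{Y}\exp(-c(\log\log X)^2)$, well below $\sqrt{Y/\log^{2A}X}$. The $O(L)$ partition-boundary contributions, each of relative size $\delta_A$, sum up harmlessly.

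The main obstacle is recognising the telescoping closed form for the total truncation error in part (ii); a naive level-by-level $L^2$-bound would lose because at level $\ell$ one would need to Cauchy--Schwarz over up to $(\log X)^{O(\ell)}$ tuples $(Q_1,\ldots,Q_\ell)$, and the resulting factor is too large to absorb. Once the closed form is in hand, the analysis reduces to a single application of Rankin's trick on smooth integers $e\mid P(z_0)$, exploiting the hierarchy $\log D/\log z_0 = (\log\log X)^2$, which delivers a saving stronger than any fixed power of $\log X$.
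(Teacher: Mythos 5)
Your proof of part (i) is correct and coincides with the paper's: one Buchstab step down to $z_0$, truncated M\"obius inversion of $\rho(k,z_0)$ controlled by Rankin's trick with $\sigma=1/\log z_0$ (exploiting $\log D/\log z_0=(\log\log X)^2$), and $\delta_A$-adic splitting whose $q\to Q$ and boundary mismatches are bounded via integers with two prime factors in a common short interval. For part (ii) the paper merely says ``iterate (i) $L+1$ times''; your telescoping identity $\sum_{e\mid P(z_0),\, e>D}\mu(e)B(n/e)$ for the accumulated truncation error is a correct and worthwhile way to make that iteration precise --- it amounts to the intended pointwise-in-$n$ bookkeeping of the errors (where the tuple multiplicity $2^{\omega_{[z_0,z_1)}(n)}\leq d(n)$ stays divisor-bounded on average), and you are right that blindly applying (i) to each $(Q_1,\dotsc,Q_\ell)$-tuple and summing $L^2$-norms would lose an unaffordable factor of $(\log X)^{O(A\ell)}$.
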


\begin{proof}
Let us consider the left-hand side of~\eqref{eq:HarmanDecomposition}. We first use Buchstab's identity and then split the arising prime variable $q$ into short intervals. This gives
\begin{align*}
\sum_{n = mk} \alpha_m \rho(k, z_1) = \sum_{\substack{n = mk}} \alpha_m \rho(k, z_0) - \sum_{\substack{Q =  (1+\delta_A)^j \\ \frac{z_0}{1+\delta_A} \leq Q < z_1}} \sum_{\substack{n = qmk \\ z_0 \leq q < z_1 \\ Q < q \leq Q(1+\delta_A)}} \alpha_m \rho(k, q).
\end{align*}
Hence the formula~\eqref{eq:HarmanDecomposition} holds with $c_n = c_{1, n} + c_{2, n} + c_{3, n}$, where
\begin{align*}
c_{1, n} &:= \sum_{\substack{n = mk}} \alpha_m \rho(k, z_0) - \sum_{\substack{n = emk \\ e \mid P(z_0),\, e \leq D}} \alpha_m \mu(e), \\
c_{2, n} &:= \sum_{\substack{Q = (1+\delta_A)^j \\ z_0 \leq Q < z_1}} \sum_{\substack{n = qmk \\ Q < q \leq Q (1+\delta_A)}} \alpha_m \rho(k, Q) - \sum_{\substack{Q =  (1+\delta_A)^j \\ \frac{z_0}{1+\delta_A} \leq Q < z_1}} \sum_{\substack{n = qmk \\ z_0 \leq q < z_1 \\ Q < q \leq Q(1+\delta_A)}} \alpha_m \rho(k, Q), \\
c_{3, n} &:= \sum_{\substack{Q =  (1+\delta_A)^j \\ \frac{z_0}{1+\delta_A} \leq Q < z_1}} \sum_{\substack{n = qmk \\ z_0 \leq q < z_1 \\ Q < q \leq Q(1+\delta_A)}} \alpha_m \rho(k, Q) - \sum_{\substack{Q =  (1+\delta_A)^j \\ \frac{z_0}{1+\delta_A} \leq Q < z_1}} \sum_{\substack{n = qmk \\ z_0 \leq q < z_1 \\ Q < q \leq Q(1+\delta_A)}} \alpha_m \rho(k, q).
\end{align*}
Note that, for each $j= 1, 2, 3$, we have $|c_{j, n}| \ll d_3(n)$ and hence part (i) follows if we show that, for any $Y \in (X^{99/100}, 4X]$ and any $j = 1, 2, 3$, we have
\begin{align}
\label{eq:cjnupper}
\sum_{n \sim Y} d_3(n) |c_{j, n}| \ll_A \frac{Y}{\log^{3A} X}.
\end{align}

Let us first consider $c_{1,n}$. By~\cite[Lemma 15]{hb-prime} (alternatively see~\cite[Lemma 4.1]{harman-sieves}) we have, for any $\ell \in \mathbb{N}$,
\[
\left|\rho(\ell, z_0) - \sum_{\substack{e \mid (\ell, P(z_0)) \\ e \leq D}} \mu(e)\right| \leq \sum_{\substack{e \mid (\ell, P(z_0)) \\ D < e \leq Dz_0}} 1, 
\]
so that
\[
|c_{1,n}| \ll  \sum_{\substack{n = emk \\ e \mid P(z_0),\, D < e \leq Dz_0}} 1 \leq  \sum_{\substack{n = ek \\ e \mid P(z_0),\, D < e \leq Dz_0}} d(k). 
\]
Hence by~\eqref{eq:divbound}
\begin{align*}
\sum_{n \sim Y} d_3(n) |c_{1, n}| &\ll \sum_{n \sim Y} d_3(n) \sum_{\substack{n = ek \\ e \mid P(z_0) \\ D < e \leq Dz_0}} d(k) \ll \sum_{\substack{e \mid P(z_0) \\ D < e \leq Dz_0}} d_3(e) \sum_{k \sim Y/e} d(k)d_3(k) \\
&\ll Y\log^5 Y \sum_{\substack{e \mid P(z_0) \\ D < e \leq Dz_0}} \frac{d_3(e)}{e}.
\end{align*}
By~\cite[Lemma 4.3]{harman-sieves} (a standard application of Rankin's trick), this is $\ll Y \log^{-3A} Y$.

Let us now turn to $c_{2,n}$. We have
\begin{equation}
\label{eq:c2nbound}
|c_{2,n}| \ll \sum_{\substack{n = qk \\ \frac{z_0}{1+\delta_A} \leq q < z_0(1+\delta_A)}} d(k) + \sum_{\substack{n = qk \\ z_1 \leq q < z_1(1+\delta_A)}} d(k).
\end{equation}
The contribution of the second term in~\eqref{eq:c2nbound} to the left-hand side of~\eqref{eq:cjnupper} with $j=2$ is thus by~\eqref{eq:divbound}
\begin{align*}
&\ll \sum_{n \sim Y} d_3(n) \sum_{\substack{n = qk \\ z_1 \leq q < z_1(1+\delta_A)}} d(k) \ll \sum_{z_1 \leq q < z_1(1+\delta_A)}\, \sum_{k \sim Y/q} d_3(k)d(k) \\
& \ll Y\log^5 Y  \sum_{z_1 \leq q < z_1(1+\delta_A)} \frac{1}{q} \ll \frac{Y}{(\log X)^{3A}}.
\end{align*}
The contribution of the first term in~\eqref{eq:c2nbound} to the left-hand side of~\eqref{eq:cjnupper} with $j=2$ can be similarly shown to be $\ll Y(\log X)^{-3A}$.

Consider finally $c_{3,n}$. If $c_{3,n} \neq 0$, then, for some $Q$ as in the definition of $c_{3, n}$, the integer $n$ has at least two prime factors from $(Q, Q (1+\delta_A)]$. Hence
\[
|c_{3, n}| \ll \sum_{\substack{Q =  (1+\delta_A)^j \\ \frac{z_0}{1+\delta_A} \leq Q < z_1}} \sum_{\substack{n = q_1 q_2 k \\ Q < q_1 \leq q_2 \leq Q (1+\delta_A)}} d(k).
\] 
Hence, using~\eqref{eq:divbound} again,
\begin{align*}
\sum_{n \sim Y} d_3(n) |c_{3, n}| &\ll \sum_{n \sim Y} d_3(n) \sum_{\substack{Q =  (1+\delta_A)^j \\ \frac{z_0}{1+\delta_A} \leq Q < z_1}} \sum_{\substack{n = q_1 q_2 k \\ Q < q_1 \leq q_2 \leq Q (1+\delta_A)}} d(k) \\
&\ll \sum_{\frac{z_0}{1+\delta_A} \leq q_1 < z_1(1+\delta_A)}\,  \sum_{ q_2 \in [q_1, q_1(1+\delta_A)]}\, \sum_{k \sim Y/(q_1 q_2)} d(k) d_3(k) \\
&\ll Y (\log Y)^5  \sum_{\frac{z_0}{1+\delta_A} \leq q_1 < z_1(1+\delta_A)} \, \sum_{ q_2 \in [q_1, q_1(1+\delta_A)]} \frac{1}{q_1 q_2} \ll \frac{Y}{\log^{3A} X}.
\end{align*}
This finishes the proof of part (i). 

To prove part (ii), we claim that for any $J\geq 0$ we have
\begin{align}\label{eq:iterate}\begin{aligned}
\sum_{n = mk} \alpha_m \rho(k, z_1) &= \sum_{\ell = 0}^J (-1)^\ell \sum_{\substack{j_\ell < \dotsb < j_1 \\ Q_u = (1+\delta_A)^{j_u} \\ z_0 \leq Q_\ell < \dotsb < Q_1 < z_1}}\,\, \sum_{\substack{n = emk q_1 \dotsm q_\ell \\ e \mid P(z_0),\, e \leq D \\ Q_u < q_u \leq (1+\delta_A)Q_u}} \alpha_{m} \mu(e)\\
&+(-1)^{J+1}\sum_{\substack{j_{J+1} < \dotsb < j_1 \\ Q_u = (1+\delta_A)^{j_u} \\ z_0 \leq Q_{J+1} < \dotsb < Q_1 < z_1}}\,\, \sum_{\substack{n = mk q_1 \dotsm q_{J+1} \\ Q_u < q_u \leq (1+\delta_A)Q_u}} \alpha_{m} \rho(k,Q_{J+1})+ c_{n,J}
\end{aligned}
\end{align}
with $c_{n,J}$ satisfying for any $Y \in (X^{99/100}, 4X]$ the bound
\begin{align}\label{eq:l2}
\sum_{n\sim Y}|c_{n,J}|^2\ll_A\frac{Y}{(\log X)^{3A}}\left\lceil \frac{\log(4X)}{\log z_0}\right\rceil^{\sum_{0\leq j\leq J} 2j}.
\end{align}
For $J=0$, we have this by part (i). Supposing then that we have this for some $J$, the case $J+1$ follows by applying part (i) with $z_1=Q_{J+1}$ to the new sequence
\begin{align*}
 \alpha_{r}'= \sum_{\substack{j_{J+1} < \dotsb < j_1 \\ Q_u = (1+\delta_A)^{j_u} \\ z_0 \leq Q_{J+1} < \dotsb < Q_1 < z_1}}\,\,\sum_{r=mq_1\cdots q_{J+1}}\alpha_m,
\end{align*}
which is bounded by $\lceil (\log 4X)/(\log z_0)\rceil^{J+1}$ times the maximum of $\alpha_m$.
Hence,~\eqref{eq:iterate} holds with the bound~\eqref{eq:l2}.

Part (ii) follows from~\eqref{eq:iterate} and~\eqref{eq:l2} with $J=L$, since $((\log 4X)/(\log z_0)+1)^{2L^2}=(\log X)^{o(1)}$ and since for $J=L$ the last sum in~\eqref{eq:iterate} is empty, as a number $n\leq 4X$ cannot have more than $L$ prime factors that are $\geq z_0$.
\end{proof}

\begin{proof}[Proof of Proposition~\ref{prop_rho}]
Recall from~\eqref{eq:rho-def} that
\begin{align*}
\rho^-(n) = \rho(n, z) - \sum_{\substack{n = qm \\ z \leq q < 2X^{1/2}}} \rho(m, z) +  \sum_{\substack{n = q_1 q_2 m \\ z \leq q_2 < q_1 < X^{1/4-2\varepsilon} \\ q_1 q_2^4 < X^{1-2\varepsilon}}} \rho(m, z) - \sum_{\substack{n = q_1 q_2 q_3 m \\ z \leq q_3 < q_2 < q_1 < X^{1/4-2\varepsilon} \\ q_1 q_2^4 < X^{1-2\varepsilon}}} \rho(m, z).
\end{align*}

Let us concentrate on the third term, the other terms being treated similarly (except the fourth term leads to type I/II sums instead of type I sums). We first split the variables $q_1$ and $q_2$ into $\delta_A$-adic ranges and write 
\begin{align}
\nonumber
&1_{n \in (Y/2, 4Y]} \sum_{\substack{n = q_1 q_2 m \\ z \leq q_2 < q_1 < X^{1/4-2\varepsilon} \\ q_1 q_2^4 < X^{1-2\varepsilon}}} \rho(m, z) \\
\label{eq:afteradic}
&=1_{n \in (Y/2, 4Y]} \sum_{\substack{Q_1 = (1+\delta_A)^{j_1}, Q_2 = (1+\delta_A)^{j_2} \\ z \leq Q_2 < Q_1 < X^{1/4-2\varepsilon} \\ Q_1 Q_2^4 < X^{1-2\varepsilon}}} \sum_{\substack{n = q_1 q_2 m \\ Q_j < q_j \leq (1+\delta_A)Q_j}} \rho(m, z) + d_{1, n}.
\end{align}
We can show that the mean square of $d_{1, n}$ is small by arguing as when we treated $c_{3, n}$ in the proof of Lemma~\ref{le:HarmanSieve}, so $d_{1,n}$ can be included in $c_n$ in the statement of Proposition~\ref{prop_rho}.

Applying Lemma~\ref{le:HarmanSieve}(ii) (taking $\alpha_m = 1_{m = q_1q_2 \in \mathcal{P}_2}$, where $\mathcal{P}_2$ is defined by the summation conditions above) and splitting also the arising variables $e$ and $k$ from Lemma~\ref{le:HarmanSieve} into $\delta_A$-adic intervals, we see that the main term in~\eqref{eq:afteradic} is a linear combination of an acceptable error and
\[
\ll L \left(\frac{\log(z/z_0)}{\log(1+\delta_A)}\right)^{L} (\log X)^3 \log D \ll \exp((\log \log X)^{5})
\]
terms of the form
\begin{align*}
&1_{n \in (Y/2, 4Y]} \sum_{\substack{n = eq_1q_2k q'_1 \dotsm q'_\ell \\ e \mid P(z_0), E < e \leq E(1+\delta_A) \\ Q_j < q_j \leq (1+\delta_A)Q_j \\ Q'_u < q'_u \leq (1+\delta_A)Q'_u \\ K < k \leq K(1+\delta_A)}} \mu(e),
\end{align*}
where $\ell \leq L$,
\begin{align*}
z \leq Q_2 < Q_1 < X^{1/4-2\varepsilon}, \quad Q_1 Q_2^4 < X^{1-2\varepsilon}, \quad z_0 \leq Q'_\ell < \dotsb < Q'_1 < z, \text{and } E \leq D.
\end{align*}

Parts of the linear combination where $EQ_1Q_2 K Q'_1 \dotsm Q'_\ell \not \in (Y/2, 4Y]$ make an acceptable contribution arguing as with $c_{2,n}$ in the proof of Lemma~\ref{le:HarmanSieve}. Once we have imposed this condition, a similar argument allows us to dispose of the factor $1_{n \in (Y/2, 4Y]}$.

When $Q'_1 \dotsm Q'_\ell < X^{\varepsilon/2}$, we have $E Q_1 Q_2 Q'_1 \dotsm Q'_\ell \leq X^{1/2+\varepsilon}$, and hence we have a type I sum. On the other hand, when $Q_1' \dotsm Q_\ell' \geq X^{\varepsilon/2}$, one can find $I \subseteq \{1, \dotsc, \ell\}$ such that $X^{\varepsilon/2}\leq \prod_{i\in I}Q_i' \leq z$ and hence we have a type II sum.
\end{proof}

\section{Mean value theorems for Dirichlet polynomials}
As usual, the variance estimate in Theorem~\ref{thm_minorant}(iii) is reduced to a mean square estimate for corresponding Dirichlet polynomials. For the following lemma, see e.g.~\cite[Lemma 1]{tera-primes}.

\begin{lemma}[Reduction to Dirichlet polynomials]
\label{lem:DirPolRed} 
Let $X\geq 3$, $T_0=X^{1/1000}$ and let $\varepsilon>0$ be small enough but fixed. Let $c=2.1, h=(\log X)^c, a \in [c-1-1/10000, c-1]$, and $P_1 = (\log X)^a$. Define
\begin{align*}
P_1(s):=\sum_{\substack{p_1 \sim P_1}} \frac{1}{p_1^{s}},\quad P(s):=\sum_{X/(2P_1)<  n\leq  4X/P_1}\frac{\rho^-(n)}{n^{s}}.
\end{align*}
Suppose that, for any $T \geq X/h$,
\begin{align*}
\int_{T_0}^{T}|P_1(1+it)|^2|P(1+it)|^2\d t\ll \frac{T}{X/h} \cdot \frac{1}{(\log X)^{2+\varepsilon}}.    
\end{align*}
Then~\eqref{eq:minMSbound} holds. 
\end{lemma}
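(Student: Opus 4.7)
The plan is to execute the standard Perron-plus-Plancherel reduction that expresses a variance of short-interval sums as a mean square of the associated Dirichlet polynomial on the line $\Re s = 1$. Define coefficients
\[
c_m := \sum_{\substack{m = p_1 n \\ p_1 \sim P_1 \\ X/(2P_1) < n \leq 4X/P_1}} \rho^-(n),
\]
supported on $m \in (X/2, 8X]$, so that $F(s) := P_1(s)P(s) = \sum_m c_m m^{-s}$, and write $V_h(x) := h^{-1}\sum_{x<m\leq x+h} c_m$ and similarly $V_{h_1}$. The task is to bound the mean square of $V_h - V_{h_1}$ over $x \in [X,2X]$ in terms of the hypothesized mean square of $F(1+it)$.

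Applying the truncated Perron formula on the line $\Re s = 1$, one has
\[
V_h(x) - V_{h_1}(x) = \frac{1}{2\pi}\int_{-\infty}^{\infty} F(1+it)\, x^{it}\, B(x,t)\, dt,
\]
where $B(x,t) := B_h(x,t) - B_{h_1}(x,t)$ and $B_h(x,t) := ((x+h)^{1+it} - x^{1+it})/(h(1+it)x^{it})$. A Taylor expansion gives $B_h(x,t) = 1 + (it/2)(h/x) + O((1+|t|)^2 (h/x)^2)$ whenever $(1+|t|)h/x \ll 1$, and trivially $|B_h(x,t)| \ll \min(1, x/(h|t|))$. Subtracting yields, for $x \asymp X$,
\[
|B(x,t)| \ll \begin{cases} |t| h_1/X, & |t| \leq X/h_1,\\ 1, & X/h_1 \leq |t| \leq X/h,\\ X/(h|t|), & |t| \geq X/h,\end{cases}
\]
with the smallness for $|t| \leq X/h_1$ coming from cancellation between $B_h$ and $B_{h_1}$ to leading order. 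After the change of variables $x = Xe^y$ and an application of Gallagher's lemma (to replace $y \in [0,\log 2]$ by a smoothly truncated integral over $\R$), Plancherel yields
\[
\frac{1}{X}\int_X^{2X} |V_h(x) - V_{h_1}(x)|^2\, dx \ll \int_{-\infty}^{\infty} |F(1+it)|^2 K(t)^2\, dt,
\]
with $K(t)$ obeying the same three-regime bound as $B$.

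I then split the range of $|t|$ into $[0,T_0]$, $[T_0, X/h]$, and $[X/h, \infty)$ (the negative-$t$ range is symmetric). On $|t| \leq T_0$, the pointwise bound $|F(1+it)| \ll 1$ (which follows from $|P_1(1+it)| \ll 1/\log P_1$ and $|P(1+it)| \ll 1/\log X$ via Mertens) combined with $K(t)^2 \leq (T_0 h_1/X)^2 = X^{-18/1000}$ gives a contribution $\ll T_0 X^{-18/1000} = X^{-17/1000}$, which is negligible. On $T_0 \leq |t| \leq X/h$ the kernel satisfies $K(t)^2 \leq 1$, and applying the hypothesis at $T = X/h$ directly yields $\int_{T_0}^{X/h} |F(1+it)|^2\, dt \ll (\log X)^{-2-\varepsilon}$. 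On $|t| \geq X/h$, decompose dyadically into $[T, 2T]$ with $T = 2^k X/h$, $k \geq 0$; the kernel bound $K(t)^2 \ll (X/(hT))^2$ and the hypothesis $\int_T^{2T} |F(1+it)|^2\, dt \ll (Th/X)(\log X)^{-2-\varepsilon}$ combine to give a contribution of $\ll 2^{-k}(\log X)^{-2-\varepsilon}$ per dyadic piece, which sums geometrically in $k$ to $(\log X)^{-2-\varepsilon}$. Adding the three contributions gives \eqref{eq:minMSbound}.

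The only delicate point is the kernel bound in the second paragraph, which requires a careful truncated-Perron computation combined with Gallagher's lemma in order to convert a finite-$x$ integral into a Plancherel-type identity on the line $\Re s = 1$. However, this is essentially the content of \cite[Lemma 1]{tera-primes}, from which the present lemma can be read off directly; no new ideas are required.
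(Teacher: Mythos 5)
Your proposal is correct and follows essentially the same route as the paper, which simply cites \cite[Lemma 1]{tera-primes} for this standard Perron--Plancherel variance reduction; your kernel bounds and the three-range splitting (with the cancellation $B_h-B_{h_1}=O(|t|h_1/X)$ for small $|t|$ and the dyadic decomposition for $|t|\geq X/h$ matched against the hypothesis at each $T$) reproduce exactly the content of that lemma. The one point you flag as delicate --- passing from the $x$-dependent kernel to a genuine Plancherel identity --- is indeed the technical heart of the cited lemma, and deferring it there is no less complete than what the paper itself does.
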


In this section we collect some mean value theorems for Dirichlet polynomials that we shall need.
Let us start with the standard mean value theorem (see~\cite[Theorem 9.1]{iw-kow}).
\begin{lemma}[Mean value theorem for Dirichlet polynomials]
\label{le:contMVT}
Let $N, T \geq 1$ and $A(s) = \sum_{n \leq N} a_n n^{-s}$. Then
\[
\int_{-T}^T |A(it)|^2 \d t = (2T+O(N)) \sum_{n \leq N} |a_n|^2.
\]
\end{lemma}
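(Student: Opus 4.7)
The plan is to expand $|A(it)|^2$ into a double sum and integrate termwise. Writing $|A(it)|^2 = A(it)\overline{A(it)} = \sum_{m, n \leq N} a_m \overline{a_n} (n/m)^{it}$, I would split into diagonal ($m = n$) and off-diagonal ($m \neq n$) contributions.

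The diagonal terms are immediate: $\int_{-T}^{T} 1 \, \d t = 2T$, so they contribute exactly $2T \sum_{n \leq N} |a_n|^2$, which is the main term in the claimed asymptotic. For the off-diagonal terms, I would compute
\[
\int_{-T}^{T} (n/m)^{it} \, \d t = \frac{2 \sin(T \log(n/m))}{\log(n/m)},
\]
which is bounded in absolute value by $2 / |\log(n/m)|$. So the task reduces to showing that
\[
\sum_{\substack{m, n \leq N \\ m \neq n}} \frac{a_m \overline{a_n}}{\log(n/m)} \ll N \sum_{n \leq N} |a_n|^2.
\]

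At this point I would invoke the Montgomery--Vaughan form of Hilbert's inequality: if $\lambda_1, \dotsc, \lambda_N \in \R$ satisfy $|\lambda_m - \lambda_n| \geq \delta > 0$ for all $m \neq n$, then
\[
\left| \sum_{m \neq n} \frac{x_m \overline{x_n}}{\lambda_m - \lambda_n} \right| \leq \pi \delta^{-1} \sum_n |x_n|^2.
\]
Taking $\lambda_n = \log n$ for integers $n \leq N$, the minimum spacing is $\min_{m < n \leq N}(\log n - \log m) = \log(N/(N-1)) \gg 1/N$, so the inequality delivers precisely the $O(N) \sum_n |a_n|^2$ bound on the off-diagonal part, and combining with the diagonal gives the lemma.

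The main (only) subtlety is the off-diagonal estimate: a crude bound via $|\log(n/m)| \geq 1/\max(m,n) \geq 1/N$ followed by Cauchy--Schwarz would only yield $O(N \log N) \sum |a_n|^2$, losing a logarithm. Extracting the sharp $O(N)$ requires genuine cancellation of the Hilbert kernel $1/\log(n/m)$, which is exactly what Montgomery--Vaughan (or Selberg's proof of Hilbert's inequality) provides. Since this is a standard result, I would simply cite \cite[Theorem 9.1]{iw-kow} as the authors do, rather than reproduce the Hilbert inequality argument in full.
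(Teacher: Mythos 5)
The paper gives no proof of this lemma at all---it simply cites \cite[Theorem 9.1]{iw-kow}---and your sketch is exactly the standard Montgomery--Vaughan argument underlying that citation (diagonal main term plus Hilbert's inequality for the off-diagonal), so the two approaches coincide. One small imprecision worth noting: you cannot first bound $\int_{-T}^{T}(n/m)^{it}\,\d t$ in absolute value by $2/|\log(n/m)|$ and then claim the task reduces to the signed sum $\sum_{m\neq n} a_m\overline{a_n}/\log(n/m)$, since that discards the factor $\sin(T\log(n/m))$ from a sum that relies on cancellation; the correct (and routine) step is to write $2\sin(T\log(n/m)) = \bigl((n/m)^{iT}-(n/m)^{-iT}\bigr)/i$ and absorb the phases $m^{\mp iT}$, $n^{\pm iT}$ into the coefficients, which leaves $|a_n|$ unchanged and puts each of the two resulting sums in exactly the Hilbert form $\sum_{m \neq n} b_m\overline{b_n}/(\lambda_m-\lambda_n)$ with $\lambda_n=\log n$ and spacing $\delta \gg 1/N$, after which your appeal to Montgomery--Vaughan gives the stated $O(N)\sum_{n}|a_n|^2$.
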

We will also need the following variant, which works better when $a_n$ has sparse support.
\begin{lemma}[Improved mean value theorem]
\label{le:contMVT2}
Let $N, T \geq 1$ and $A(s) = \sum_{n \leq N} a_n n^{-s}$. Then
\[
\begin{split}
\int_{-T}^T |A(it)|^2 \d t &\ll T \sum_{n \leq N} |a_n|^2 + T \sum_{0 < |k| \leq N/T} \sum_{n \leq N} |a_n| |a_{n+k}|.
\end{split}
\]
\end{lemma}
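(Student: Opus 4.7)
The plan is to replace the sharp cutoff $1_{[-T,T]}$ by a nonnegative smooth majorant whose Fourier transform has compact support, a standard Selberg-type device. Fix once and for all a Schwartz function $\Psi \colon \R \to \R_{\geq 0}$ with $\Psi(t) \geq 1$ for $|t| \leq 1$ and such that $\widetilde{\Psi}(\xi) := \int_{\R} \Psi(t) e^{it\xi}\,\d t$ is supported in $[-C_0, C_0]$ for some absolute constant $C_0 > 0$. Such a $\Psi$ exists via a soft construction: take e.g.\ $\Psi(t) = c\,|\widehat{\eta}(t/R)|^2$ where $\eta$ is a smooth compactly supported bump and $c, R > 0$ are chosen so that $\Psi \geq 1$ on $[-1,1]$; then $\widetilde{\Psi}$ is compactly supported since it is the self-convolution (up to rescaling) of a compactly supported function.

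With $\Psi$ in hand, the argument is a direct expansion. First,
\[
\int_{-T}^{T} |A(it)|^2 \d t \leq \int_{\R} |A(it)|^2 \Psi(t/T)\,\d t = \sum_{m,n \leq N} a_m \overline{a_n} \int_{\R} (n/m)^{it}\Psi(t/T)\,\d t.
\]
The change of variable $t = Ts$ identifies the inner integral with $T\,\widetilde{\Psi}(T\log(n/m))$, which vanishes whenever $|T\log(n/m)| > C_0$ and is otherwise bounded by $\|\widetilde{\Psi}\|_{\infty} \ll 1$. Consequently
\[
\int_{-T}^T |A(it)|^2\,\d t \ll T \sum_{\substack{m, n \leq N \\ |\log(n/m)| \leq C_0/T}} |a_m||a_n|.
\]

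It remains to split this into diagonal and off-diagonal contributions. The pairs with $m = n$ contribute exactly $T\sum_{n \leq N}|a_n|^2$, which is the first term of the lemma. For $m \neq n$, write $n = m + k$ with $k \neq 0$; the constraint $|\log(1+k/m)| \leq C_0/T$ together with $m \leq N$ forces $|k| \ll m/T \leq N/T$. Regrouping the sum by $k$ then produces the off-diagonal term $T \sum_{0 < |k| \ll N/T} \sum_{n \leq N} |a_n||a_{n+k}|$ appearing in the statement, with the implicit constant either absorbed into the symbol $\ll$ or eliminated at the outset by rescaling $\Psi$ so that $C_0 \leq 1$. The proof is essentially soft; the only point to be careful about is constructing the majorant $\Psi$, but this is standard and presents no genuine obstacle.
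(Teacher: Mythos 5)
Your proof is correct and is essentially the paper's argument made self-contained: the paper simply cites \cite[Lemma 7.1]{iw-kow} with $x_m = \frac{1}{2\pi}\log m$ and $Y = 10T$, and that lemma is itself proved by exactly the Fourier-majorant expansion and diagonal/near-diagonal split that you carry out. The one point needing care is the summation range $|k| \le N/T$ versus $|k| \ll N/T$: a constant in a range of summation cannot literally be ``absorbed into $\ll$'', but your alternative fix --- rescaling $\Psi$ so that $C_0$ is small enough that $(e^{C_0/T}-1)N \le N/T$ for all $T \ge 1$ --- is valid, so the argument stands.
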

\begin{proof}
This follows from~\cite[Lemma 7.1]{iw-kow} taking $Y = 10T$ and $x_m = \frac{1}{2\pi}\log m$ there.
\end{proof}

The following sparse mean value estimate of Heath-Brown~\cite{hb-smooth} plays an important role in our arguments.  

\begin{lemma}[Heath-Brown's sparse mean value estimate]\label{le_HB_MVT} Let $T\geq M\geq 1$,  let $\mathcal{M}\subset [M,T]$ be a set of integers, and let $N\geq 2$. Let 
\begin{align*}
M(s)=\sum_{m\in \mathcal{M}}\varepsilon_mm^{-s} \quad \textnormal{ with } \quad |\varepsilon_m|\leq 1    
\end{align*}
and
\begin{align*}
A(s)=\sum_{n \sim N}a_n n^{-s}
\end{align*}
for some $a_n \in \mathbb{R}$. Then we have, for any $\eta > 0$,
\begin{align}\label{eq_HB}
\int_{-T}^{T}|M(1+it)|^2|A(1+it)|^2\d t\ll_{\eta} \left( \left(\frac{|\mathcal{M}|}{M}\right)^2+(NT)^\eta\left(\frac{|\mathcal{M}|T}{M^2N}+ \frac{|\mathcal{M}|^{7/4}T^{3/4}}{M^2N}\right)\right)\max_{n}|a_n|^2.    
\end{align}
Moreover, if $N\geq T^{2/3}$ or $|\mathcal{M}|\leq T^{1/3}$, the third term on the right-hand side of~\eqref{eq_HB} can be deleted.
\end{lemma}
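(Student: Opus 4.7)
The strategy is to expand the integrand as a quadruple sum over $(m_1, m_2, n_1, n_2)$, separate the diagonal $m_1 n_1 = m_2 n_2$ from the rest, and treat the remaining off-diagonal via Cauchy--Schwarz combined with a fourth-moment bound for the sparse polynomial $M(s)$.

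After factoring $\max_n|a_n|^2$ out, I would expand
\[
|M(1+it)|^2|A(1+it)|^2 = \sum_{\substack{m_1,m_2\in\mathcal M\\ n_1,n_2\sim N}} \frac{\varepsilon_{m_1}\bar\varepsilon_{m_2} a_{n_1}\bar a_{n_2}}{m_1 m_2 n_1 n_2}\left(\frac{m_1 n_1}{m_2 n_2}\right)^{-it}
\]
and integrate over $[-T,T]$. The inner integral equals $2T$ when $m_1 n_1 = m_2 n_2$ and is $O(|\log(m_1 n_1/m_2 n_2)|^{-1})$ otherwise. The sub-diagonal $m_1 = m_2,\, n_1 = n_2$ gives precisely
\[
2T\sum_{m\in\mathcal M}\frac{|\varepsilon_m|^2}{m^2}\sum_{n\sim N}\frac{|a_n|^2}{n^2}\ll \frac{|\mathcal M|T}{M^2 N}\max_n|a_n|^2,
\]
which is the middle term of the lemma. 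The first term $(|\mathcal M|/M)^2\max_n|a_n|^2$ is recovered by combining the pointwise bound $|M(1+it)|^2 \leq (|\mathcal M|/M)^2$ with the $O(N)$-tail contribution from Lemma~\ref{le:contMVT} for $A$, which yields $\int_{-T}^T|A(1+it)|^2\,dt \ll \max_n|a_n|^2$ in that range.

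For the remaining pairs --- both the true off-diagonal $m_1 n_1 \neq m_2 n_2$ and the nontrivial diagonal where $m_1 n_1 = m_2 n_2$ but $m_1 \neq m_2$ --- I would partition by $|\log(m_1 n_1/m_2 n_2)|^{-1}$ dyadically and apply Cauchy--Schwarz, bringing in the fourth moment $\int_{-T}^T|M(1+it)|^4\,dt$ together with $\int_{-T}^T|A(1+it)|^2\,dt$. The sparse fourth-moment estimate from~\cite{hb-smooth}, which amounts to an additive-energy bound for the set $\mathcal M$, produces the refined term $(NT)^\eta|\mathcal M|^{7/4}T^{3/4}/(M^2N)\max_n|a_n|^2$. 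The deletion of this term when $N\geq T^{2/3}$ or $|\mathcal M|\leq T^{1/3}$ can then be verified by checking that in these regimes, the standard mean-value theorem applied to the product polynomial $M(s)A(s)$ (viewed as a Dirichlet polynomial on $\mathcal{K} := \mathcal{M}\cdot[N,2N]$ of total length $\leq 2TN$) already gives a bound dominated by the first two terms, so the fourth-moment refinement is unnecessary.

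The main obstacle is establishing the sparse fourth-moment estimate for $M(s)$ with exponent $7/4$ uniformly for arbitrary $\mathcal M \subset [M,T]$. Since no arithmetic structure can be assumed on $\mathcal M$, one cannot rely on elementary divisor estimates; instead the bound must come from a Halász--Montgomery large-values inequality or an incidence-geometric count of additive quadruples in $\mathcal M$, and this is the technical heart of Heath-Brown's argument in~\cite{hb-smooth}.
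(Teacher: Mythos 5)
Your proposal sets out to reprove Heath-Brown's estimate from first principles, whereas the paper's proof of this lemma is purely a reduction to the cited result: after restricting to $[0,T]$ by symmetry, normalising so that $|a_n|\leq 1$, splitting $a_n=a_n^+-a_n^-$ to reach nonnegative coefficients (this step is needed because \cite[Theorem 4]{hb-smooth} is stated for coefficients in $[0,1]$ and the bound is not monotone in the coefficients), and rescaling $M(1+it)$ and $A(1+it)$ to Dirichlet polynomials on the $0$-line, the authors simply invoke \cite[Theorem 4(iii)]{hb-smooth}. Since you yourself concede that the ``technical heart'' --- the sparse fourth-moment/additive-energy input producing the exponent $7/4$ --- must be taken from \cite{hb-smooth}, your argument is not actually more self-contained than the paper's; it re-derives the easy diagonal terms and defers the only hard one.

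Beyond that, two steps of your sketch would fail as written. First, the Cauchy--Schwarz step does not produce the combination you claim: $\int|M|^2|A|^2$ bounded via Cauchy--Schwarz yields $(\int|M|^4)^{1/2}(\int|A|^4)^{1/2}$, not $\int|M|^4$ ``together with'' $\int|A|^2$; and the real difficulty in the off-diagonal terms with $n_1\neq n_2$ is retaining the factor $1/N$ in $|\mathcal{M}|T/(M^2N)$ --- a naive $\sup_t|A(1+it)|^2\cdot\int|M|^2$ bound already gives $|\mathcal{M}|T/M^2$, which is too large by a factor of $N$, and your dyadic decomposition of $|\log(m_1n_1/m_2n_2)|^{-1}$ is not shown to recover that saving. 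Second, the ``moreover'' clause: the case $|\mathcal{M}|\leq T^{1/3}$ is indeed trivial (then $|\mathcal{M}|^{7/4}T^{3/4}\leq|\mathcal{M}|T$, so the third term is dominated by the second), but the case $N\geq T^{2/3}$ cannot be verified by applying the standard mean value theorem (Lemma~\ref{le:contMVT}) to the product polynomial $M(s)A(s)$: that polynomial has length up to $2TN\geq T$, so the $O(\text{length})$ term of the mean value theorem contributes roughly $TN\cdot|\mathcal{M}|/(M^2N^2)\cdot\max_n|a_n|^2=|\mathcal{M}|T/M^2\cdot\max_n|a_n|^2$, again off by a factor of $N$ from the second term of \eqref{eq_HB}. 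This case is a separate part of Heath-Brown's Theorem 4 and genuinely requires his counting argument, not the classical mean value theorem.
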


\begin{proof}
This follows quickly from~\cite[Theorem 4]{hb-smooth}. Firstly, by symmetry, it suffices to consider the integral over $[0,T]$. Secondly, by writing $A(s)=\widetilde{A}(s)\max_{n}|a_n|$ with $\widetilde{A}(s)$ of the form $\sum_{n\sim N}\widetilde{a_n}n^{-s}$ with $|\widetilde{a_n}|\leq 1$, it suffices to consider the case $|a_n|\leq 1$. Next, by writing 
\begin{align*}
a_n =a_n^{+}-a_n^{-}
\end{align*}
with $a^{\pm}_n\in [0,1]$
and applying the triangle inequality, it suffices to consider the case $a_n\in [0,1]$. Lastly, we can write
\begin{align*}
M(1+it)=\frac{M_1(it)}{M}\quad \text{with} \quad M_1(it)=\sum_{m\in \mathcal{M}}\frac{\varepsilon_mM}{m}m^{-it}    
\end{align*}
and
\begin{align*}
A(1+it)=\frac{A_1(it)}{N}\quad \text{with} \quad A_1(it)=\sum_{n \sim N}\frac{a_nN}{n}n^{-it}    
\end{align*}
to reduce matters to an integral over the $0$-line. Now the claim follows from~\cite[Theorem 4(iii)]{hb-smooth} (with $\eta/2$ in place of $\eta$). 
\end{proof}

To obtain Type I and Type I/II information we use twisted moment estimates due to Watt~\cite{watt} and Deshouillers--Iwaniec~\cite{DI}.

\begin{lemma}
\label{le:WattDI} Let $A, N, N', T\geq 1$ with $N<N'\leq 2N$. Let 
\begin{align*}
N(s)=\sum_{N < n \leq N'}n^{-s},\quad A(s)=\sum_{m\sim A}a_mm^{-s}    
\end{align*}
with $a_n$ complex numbers, and let $\varepsilon > 0$. 
\begin{enumerate}[(i)]
\item (Watt's theorem) We have
\begin{align*}
\int_{T/2}^{T}|N(1+it)|^4|A(1+it)|^2\d t\ll T^{\varepsilon} \left(\frac{T+A^2T^{1/2}}{N^2A}+\frac{T+A}{T^4A}\right)\max_{m}|a_m|^2
\end{align*}
\item (Deshouillers--Iwaniec theorem) We have
\begin{align*}
\int_{T/2}^{T}|N(1+it)|^4|A(1+it)|^2 \d t\ll  T^{\varepsilon}\left(\frac{T+A^2T^{1/2}+A^{5/4}T^{3/4}}{N^2A}+\frac{T+A}{T^4A}\right)\frac{1}{A}\sum_{m\sim A}|a_m|^2.
\end{align*}
\end{enumerate}
\end{lemma}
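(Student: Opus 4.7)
My plan is to reduce both bounds to direct applications of Watt's twisted fourth moment estimate~\cite{watt} and the Deshouillers--Iwaniec estimate~\cite{DI}, after normalizing the Dirichlet polynomials onto the $0$-line.

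The normalization step is as follows. Writing
\[
N(1+it)=\frac{1}{N}\sum_{N<n\leq N'}\frac{N}{n}\,n^{-it}\qquad\text{and}\qquad A(1+it)=\frac{1}{A}\sum_{m\sim A}\frac{Aa_m}{m}\,m^{-it},
\]
the rescaled polynomials $\widetilde N(it)$ and $\widetilde A(it)$ have coefficients of sizes $O(1)$ and $O(|a_m|)$ respectively, and
\[
\int_{T/2}^T|N(1+it)|^4|A(1+it)|^2\,\d t=\frac{1}{N^4A^2}\int_{T/2}^T|\widetilde N(it)|^4|\widetilde A(it)|^2\,\d t.
\]
Thus the problem reduces to bounding the twisted fourth moment of a Dirichlet polynomial of length $\asymp N$ on the $0$-line by a polynomial of length $\asymp A$.

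Next, one invokes the original estimates. For part (i), Watt's theorem~\cite{watt} provides the twisted fourth moment bound that, after division by $N^4A^2$, yields precisely the two main terms $T/(N^2A)$ and $A^2T^{1/2}/(N^2A)$ (up to $T^\varepsilon$), together with the residual $(T+A)/(T^4A)$ arising from the low-frequency diagonal. For part (ii), the same reduction applies but using the Deshouillers--Iwaniec estimate, which adds the term $A^{5/4}T^{3/4}/(N^2A)$ and, crucially, permits the replacement of the supremum $\max_m|a_m|^2$ by the mean-square average $A^{-1}\sum_{m\sim A}|a_m|^2$; this sharpening is a direct consequence of their Cauchy--Schwarz step being carried out only after the sum over the twisting coefficients is in place.

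The main obstacle is matching conventions. The theorems of~\cite{watt} and~\cite{DI} are usually stated for the fourth moment of $\zeta(\tfrac12+it)$ twisted by a short Dirichlet polynomial, rather than for $|\widetilde N|^4$ on the $0$-line. The bridge is that $\widetilde N(it)^2$ is itself a Dirichlet polynomial of length $\asymp N^2$ with divisor-bounded coefficients, precisely the shape covered in~\cite{watt, DI} via the approximate functional equation for $\zeta^2$. The cutoff $N'\leq 2N$ and the restriction $t\in[T/2,T]$ cause no additional difficulty beyond routine bookkeeping.
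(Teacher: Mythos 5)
Your reduction to the $0$-line and the citations of Watt and Deshouillers--Iwaniec only cover the regime where $N(s)$ is short enough for those theorems to apply, and your account of where the term $(T+A)/(T^4A)$ comes from is not correct. The lemma is stated for arbitrary $N,T\geq 1$, and in the applications in this paper $N$ is frequently much larger than $T$ (e.g.\ $N\asymp X/(P_1M_1)$ while $T$ can be as small as $X^{1/1000}$), so this regime cannot be ignored. The paper's proof splits into two cases. For $N\leq T$ one first uses the approximate functional equation to replace $N(s)$ by a zeta-sum of length $\ll T^{1/2}$ (cf.\ (5.6.12) of Harman's book); only then do Watt's estimate (4.7) and Deshouillers--Iwaniec's estimate (14) apply, and they account only for the first group of terms $\bigl(T+A^2T^{1/2}(+A^{5/4}T^{3/4})\bigr)/(N^2A)$. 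For $N>T$ those theorems are not applicable at all; instead one uses the elementary bound
\begin{align*}
\sum_{N<n\leq N'}n^{-1-it}\ll \frac{1}{|t|}\ll\frac1T \qquad (N>T\geq |t|),
\end{align*}
so that $|N(1+it)|^4\ll T^{-4}$, and the mean value theorem applied to $A(1+it)$ then produces exactly the second group of terms $(T+A)/(T^4A)$. Attributing that term to a ``low-frequency diagonal'' inside Watt's theorem is mistaken, and as written your argument proves nothing when $N>T$.

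A secondary point: your ``bridge'' via viewing $\widetilde N(it)^2$ as a Dirichlet polynomial of length $N^2$ runs in the wrong direction. The approximate functional equation is used to \emph{shorten} the untwisted polynomial (from length up to $T$ down to $\ll T^{1/2}$), because the results of Watt and Deshouillers--Iwaniec require the fourth-power factor to be essentially an initial segment of the zeta-sum of length $O(T^{1/2})$; passing to a polynomial of length $N^2$ would take you further from, not closer to, their hypotheses. The normalization to the $0$-line itself and the observation that Deshouillers--Iwaniec allows the $\ell^2$-average of the coefficients in place of the supremum are fine and match the paper.
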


\begin{remark} Although the $A$ dependence is weaker in Lemma~\ref{le:WattDI}(ii) than in Lemma~\ref{le:WattDI}(i), the fact that Lemma~\ref{le:WattDI}(ii) involves the $\ell^2$ norm of the coefficient sequence rather than the maximum makes it more suited for our type I estimates in Section~\ref{subsec:typeI}, where we essentially end up taking $A(s)=P_1(s)^k$ with $P_1(s)=\sum_{p\sim P_1}p^{-s}$ and $P_1^k\approx T^{1/10}$. Indeed, in this situation, taking the maximum of the coefficient sequence would lead to a loss in the estimate. 
\end{remark}

\begin{proof}[Proof of Lemma~\ref{le:WattDI}]
Suppose first that $N\leq T$. Then, by the approximate functional equation, we can further reduce to $N\ll T^{1/2}$ (cf.~\cite[formula (5.6.12)]{harman-sieves}). Parts (i) and (ii) then follow from the works of Watt~\cite{watt} and Deshouillers--Iwaniec~\cite{DI}, respectively (one can apply partial summation to change the line of integration and then use~\cite[(4.7)]{watt} and~\cite[(14)]{DI}).

Suppose then that $N>T$. For $N>T\geq |t|$, we have the bound
\begin{align}\label{eq:nit}
\sum_{N< n\leq N'}n^{-1-it}\ll \frac{1}{|t|};
\end{align}
this follows e.g. from~\cite[Corollary 8.11]{iw-kow}. The claimed estimates then follow from~\eqref{eq:nit} and the mean value theorem (Lemma~\ref{le:contMVT}) applied to $A(1+it)$.
\end{proof}

\section{Large values of Dirichlet polynomials}
\begin{lemma}["Density hypothesis" for Dirichlet polynomials] \label{le_laregvalues} Let $\varepsilon>0$ be small but fixed, and let $T \geq N \geq 2$. Let $N(s)=\sum_{n \sim N}a_nn^{-s}$ with $a_n$ divisor-bounded. Assume that $T^{1-\varepsilon/10} \geq N \geq T^{9/11-10\varepsilon}$ and 
\[
5\varepsilon \leq \sigma \leq 49/206-5\varepsilon.
\]
Then we have
\[
|\{t \in [-T, T]:|N(1+it)|>N^{-\sigma}\}| \ll T^{2\sigma-\varepsilon^2/5}.
\]
\end{lemma}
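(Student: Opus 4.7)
The plan is to reduce the measure estimate to a well-spaced point count and then invoke Jutila's large value theorem for Dirichlet polynomials~\cite{jutila-large-value}. First, I would choose a maximal $1$-separated subset $\mathcal{T}\subseteq\{t\in[-T,T]:|N(1+it)|>N^{-\sigma}\}$, so that the measure of the full set is $\ll|\mathcal{T}|$; it thus suffices to show $|\mathcal{T}|\ll T^{2\sigma-\varepsilon^2/5}$. Rescaling via $N(1+it)=N^{-1}\sum_{n\sim N}(a_nN/n)n^{-it}$, the hypothesis becomes a large-value condition of height $W:=N^{1-\sigma}$ for a Dirichlet polynomial with divisor-bounded coefficients, whose $\ell^2$-norm is $O(N^{1/2}(\log T)^{O(1)})$ by~\eqref{eq:divbound}.

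Second, I would apply Jutila's large-value estimate, whose proof combines Hal\'asz--Montgomery duality with Ingham's fourth-moment bound for $\zeta(1/2+it)$. Absorbing divisor-bound logarithmic factors into a factor $T^{\varepsilon/100}$, this yields an upper bound of the form
\[
|\mathcal{T}|\ll T^{\varepsilon/100}\bigl(N^{2\sigma}+\mathcal{J}(N,T,\sigma)\bigr),
\]
where $\mathcal{J}(N,T,\sigma)$ denotes the contribution from the fourth-moment term. On the range of $\sigma$ under consideration the exponent of $N$ appearing in $\mathcal{J}$ is negative, so $\mathcal{J}$ is maximized at the lower endpoint $N=T^{9/11-10\varepsilon}$.

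Third, I would verify that both terms are $\ll T^{2\sigma-\varepsilon^2/5}$. The first term is handled by the hypotheses $N\leq T^{1-\varepsilon/10}$ and $\sigma\geq 5\varepsilon$, which give
\[
N^{2\sigma}\leq T^{(1-\varepsilon/10)\cdot 2\sigma}=T^{2\sigma-\sigma\varepsilon/5}\leq T^{2\sigma-\varepsilon^2},
\]
with room to spare. For the second term, substituting $N=T^{9/11-10\varepsilon}$ into $\mathcal{J}$ produces an expression whose exponent of $T$ is a linear function of $\sigma$; a direct computation shows that the threshold at which this exponent equals $2\sigma$ occurs precisely at $\sigma=49/206$ (corresponding to the idealised endpoint $N=T^{9/11}$). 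The loss of $5\varepsilon$ in $\sigma\leq 49/206-5\varepsilon$, together with the loss of $10\varepsilon$ in $N\geq T^{9/11-10\varepsilon}$, then produces enough slack in the exponent to absorb the desired $\varepsilon^2/5$ savings.

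The main obstacle is this last arithmetic: one has to execute Jutila's argument carefully to pin down the exact form of $\mathcal{J}(N,T,\sigma)$ and then track the $\varepsilon$-losses in both hypotheses so that the threshold $49/206$ emerges cleanly with the correct size of savings. The rest is a standard application of the large-value machinery; in particular, the choice of hypotheses on $N$ and $\sigma$ in the statement is dictated precisely by what this verification requires.
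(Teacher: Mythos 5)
Your strategy coincides with the paper's: pass to a $1$-spaced subset of the large value set and apply Jutila's large value theorem. However, there is a genuine gap at the decisive step. You write the output of the large value theorem as $T^{\varepsilon/100}\bigl(N^{2\sigma}+\mathcal{J}\bigr)$ with $\mathcal{J}$ "the contribution from the fourth-moment term", and then assert that the threshold works out to exactly $49/206$. If $\mathcal{J}$ really is the classical Hal\'asz/Huxley fourth-moment term $G^3NTV^{-6}\asymp TN^{6\sigma-2}$ (here $G=\sum_{n\sim N}|a_n|^2n^{-2}\ll N^{-1+o(1)}$ and $V=N^{-\sigma}$), then at $N=T^{9/11}$ the condition $1+\tfrac{9}{11}(6\sigma-2)\le 2\sigma$ gives only $\sigma\le 7/32$, not $49/206$. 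The exponent $49/206$ comes from Jutila's \emph{refined} estimate, which is a one-parameter family of bounds
\[
R\ll (NT)^{o(1)}\Bigl(GNV^{-2}+\bigl(GNV^{-2}\bigr)^{-1/k}G^3NTV^{-6}+\bigl(GNV^{-2}\bigr)^{4k}TN^{-2k}\Bigr),
\]
and one must take $k=7$: the middle term is then $\asymp TN^{(6-2/7)\sigma-2}$, and solving $1+\tfrac{9}{11}\bigl((6-\tfrac{2}{7})\sigma-2\bigr)\le 2\sigma$ gives precisely $\sigma\le 49/206$. Your two-term description omits both the amplification factor $(GNV^{-2})^{-1/k}$ that produces this threshold and the entire third term.

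The third term is not a formality either: for $k=7$ it is $\asymp TN^{56\sigma-14}$, and at $N=T^{9/11}$ it is $\ll T^{2\sigma}$ only for $\sigma\le 115/482\approx 0.2386$, which exceeds $49/206\approx 0.2379$ by a very small margin; so the choice $k=7$ is squeezed from both sides and the third term must be checked explicitly. Two smaller points: your prefactor $T^{\varepsilon/100}$ is too lossy against the required saving $T^{-\varepsilon^2/5}$ once $\varepsilon<1/80$ (the first term only saves $T^{-\varepsilon^2}$), so the $o(1)$ in Jutila's theorem must be taken of size $\varepsilon^2$, say $(NT)^{\varepsilon^2/500}$ as in the paper; and your reduction of the measure to a $1$-spaced count and the treatment of the first term $N^{2\sigma}$ are fine as written. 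In short, the missing idea is the $k$-parametrized form of Jutila's bound with $k=7$; without it, the computation you defer to "a direct computation" does not produce $49/206$.
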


\begin{proof}
Write 
\[
\mathcal{T} = \{t \in [-T, T]: |N(1+it)|>N^{-\sigma}\}.
\]
We apply Jutila's large values estimate (see~\cite[Theorem 9.10]{iw-kow}) with $k=7,$
\begin{equation*}
G = \sum_{n \sim N}\frac{|a_n|^2}{n^2} \ll \frac{(\log N)^{O(1)}}{N} \quad \text{and} \quad V = N^{-\sigma}. 
\end{equation*}
This gives
\begin{align*}
|\mathcal{T}| &\ll (NT)^{\varepsilon^2/500} \left(\frac{GN}{V^2} + \left(\frac{GN}{V^2}\right)^{-1/k} \frac{G^3 NT}{V^6} + \left(\frac{GN}{V^2}\right)^{4k} \frac{T}{N^{2k}}\right) \\
&\ll T^{\varepsilon^2/200} \left(N^{2\sigma} + T N^{(6-2/7)\sigma-2} +T N^{(8\sigma -2) \cdot 7}  \right) \ll T^{2\sigma-\varepsilon^2/5},
\end{align*}
as desired.
\end{proof}
Lemma~\ref{le_laregvalues} allows us to handle large values of Dirichlet polynomials in our type II sums.

\begin{proposition}[Type II estimate]\label{prop_typeII}
Let $\varepsilon>0$ be small enough but fixed. Let $T\geq 3$, and let
\[
M_1(s) = \sum_{\substack{m_1 \sim M_1}} \frac{\alpha_{m_1}}{m_1^s} \quad \text{and} \quad M_2(s) = \sum_{\substack{m_2 \sim M_2}} \frac{\beta_{m_2}}{m_2^s}  
\]
with $M_1M_2=T(\log T)^{O(1)}$ and with $(\alpha_{m_1})$ and $(\beta_{m_2})$ divisor-bounded. Suppose that 
\[
T^{\varepsilon/5} \leq M_1 \leq T^{2/11+\varepsilon}
\]
and
\[
\sigma = 49/206-10\varepsilon.
\]
Let $\mathcal{U} \subseteq [0, T]$ be a measurable set such that, for each $t \in \mathcal{U}$, one has either $|M_1(1+it)| \geq M_1^{-\sigma}$ or $|M_2(1+it)| \geq M_2^{-\sigma}$. 
Then
\begin{align}\label{eq_m1m2}
\int_{\mathcal{U}} |M_1(1+it)|^2|M_2(1+it)|^2 \d t \ll T^{-\varepsilon^2/10} + (\log T)^{O(1)} \sup_{t \in \mathcal{U}} |M_1(1+it)|^2.
\end{align}
\end{proposition}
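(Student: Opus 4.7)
The plan is to partition $\mathcal{U} = \mathcal{U}_1 \sqcup \mathcal{U}_2$ with $\mathcal{U}_1 := \{t \in \mathcal{U} : |M_1(1+it)| \geq M_1^{-\sigma}\}$ and $\mathcal{U}_2 := \mathcal{U} \setminus \mathcal{U}_1$, so that by the OR-condition defining $\mathcal{U}$ we have $|M_2(1+it)| \geq M_2^{-\sigma}$ on $\mathcal{U}_2$, while $|M_1(1+it)|^2 < M_1^{-2\sigma}$ pointwise on $\mathcal{U}_2$. Since $M_2 \asymp T/M_1$ satisfies $T^{9/11 - O(\varepsilon)} \leq M_2 \leq T^{1-\varepsilon/5}$, Jutila's large value estimate (Lemma~\ref{le_laregvalues}) is applicable to $M_2$; this is the central tool of the argument.

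For the integral over $\mathcal{U}_2$, the pointwise bound gives $\int_{\mathcal{U}_2}|M_1|^2|M_2|^2\,\d t \le M_1^{-2\sigma}\int_{\mathcal{U}_2}|M_2|^2\,\d t$, and I dyadically decompose $|M_2(1+it)|$ into levels $V_2 = M_2^{-\sigma_2}$, setting $\mathcal{U}_{V_2} := \{t \in \mathcal{U}_2 : V_2 < |M_2(1+it)| \leq 2V_2\}$. For $\sigma_2 \in [5\varepsilon, 49/206 - 5\varepsilon]$ Lemma~\ref{le_laregvalues} yields $|\mathcal{U}_{V_2}| \ll T^{2\sigma_2 - \varepsilon^2/5}$, and using $M_2^{-2\sigma_2}T^{2\sigma_2} = (T/M_2)^{2\sigma_2} \asymp M_1^{2\sigma_2}$ (from $M_1 M_2 \asymp T$) each dyadic cell contributes
\[
4V_2^2 M_1^{-2\sigma}|\mathcal{U}_{V_2}| \ll M_1^{2(\sigma_2-\sigma)} T^{-\varepsilon^2/5}(\log T)^{O(1)} \le T^{-\varepsilon^2/5}(\log T)^{O(1)},
\]
since $\sigma_2 \leq \sigma$. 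Summing over the $O(\log T)$ dyadic values of $\sigma_2$ gives $\ll T^{-\varepsilon^2/10}$, which is the first term in the claim. The boundary dyadic levels $V_2 > M_2^{-5\varepsilon}$ are handled by invoking Jutila at the threshold $\sigma_2 = 5\varepsilon$ and combining with the constraint $M_1 \leq T^{2/11+\varepsilon}$.

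For the integral over $\mathcal{U}_1$, I use the pointwise bound $|M_1(1+it)|^2 \leq V_*^2 := \sup_{t\in\mathcal{U}}|M_1(1+it)|^2$, so $\int_{\mathcal{U}_1}|M_1|^2|M_2|^2\,\d t \le V_*^2 \int_{\mathcal{U}_1}|M_2|^2\,\d t$. An analogous dyadic decomposition of $|M_2|$ combined with Jutila, together with the observation that $V_*^2 \geq M_1^{-2\sigma}$ whenever $\mathcal{U}_1$ is nonempty, absorbs the contribution into the $(\log T)^{O(1)} V_*^2$ term. The main obstacle is the careful treatment of the dyadic levels of $|M_2|$ lying just outside Jutila's applicable range, where the $M_1^{2\sigma}$ factors arising from the estimate must be balanced against $M_1^{-2\sigma}$ (on $\mathcal{U}_2$) or $V_*^2$ (on $\mathcal{U}_1$); this balancing relies on the precise calibration of the hypotheses $M_1 \leq T^{2/11+\varepsilon}$ and $\sigma = 49/206 - 10\varepsilon$, which together ensure that $M_1^{2\sigma} \leq T^{98/1133 + O(\varepsilon)}$ is comfortably absorbable.
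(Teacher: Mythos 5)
Your treatment of $\mathcal{U}_2$ in the range where Jutila applies ($5\varepsilon \leq \sigma_2 \leq \sigma$) is sound and matches one branch of the paper's argument, but there are two genuine gaps elsewhere. First, the regime where $|M_2(1+it)|$ is very large (your "boundary dyadic levels $V_2 > M_2^{-5\varepsilon}$") cannot be handled by "invoking Jutila at the threshold $\sigma_2 = 5\varepsilon$": that gives a set of measure $\ll T^{10\varepsilon}$, on which the integrand is still as large as $M_1^{-2\sigma}\cdot(\log T)^{O(1)}$, and since $M_1$ may be as small as $T^{\varepsilon/5}$ the product is $\gg T^{10\varepsilon - 2\sigma\varepsilon/5}$, a positive power of $T$ --- nowhere near $T^{-\varepsilon^2/10}$, and no calibration of $M_1 \leq T^{2/11+\varepsilon}$ rescues this. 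The paper isolates exactly this regime as a separate set $\mathcal{T}_1$ (where $|M_1|\geq M_1^{-10\varepsilon}$ or $|M_2|\geq M_2^{-10\varepsilon}$), reduces to one-spaced subsets, proves $|\mathcal{T}_1|\ll T^{50\varepsilon}$ by a high-moment argument with the discrete mean value theorem, and then applies the Hal\'asz--Montgomery inequality to $M_2$ (exploiting $M_2 \geq T^{9/11-O(\varepsilon)} \gg |\mathcal{T}_1|T^{1/2}$); this is precisely where the term $(\log T)^{O(1)}\sup_{t}|M_1(1+it)|^2$ in the conclusion comes from.

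Second, your treatment of $\mathcal{U}_1$ does not close. There you know only $|M_1(1+it)|\geq M_1^{-\sigma}$, so the dyadic level $\sigma_2$ of $|M_2|$ is unconstrained: for $\sigma_2 > 49/206-5\varepsilon$ Lemma~\ref{le_laregvalues} is not applicable to $M_2$ at all, and even in the applicable range the level-$\sigma_2$ contribution is $M_2^{-2\sigma_2}T^{2\sigma_2}V_*^2 \asymp M_1^{2\sigma_2}V_*^2$, a positive power of $T$ times $V_*^2$, which cannot be absorbed into $(\log T)^{O(1)}V_*^2$ (and $V_*^2\geq M_1^{-2\sigma}$ does not help, since $V_*$ can be much larger than $M_1^{-\sigma}$). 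The missing idea is the paper's amplification device: when the \emph{smaller} exponent is $\sigma_1$, one raises $M_1(s)$ to a power $\ell$ with $M_1^\ell \in [T^{5/6-\varepsilon}, T^{1-\varepsilon/2}]$ (the coefficients of $M_1(s)^\ell$ remain divisor-bounded since $\ell\ll 1$) and applies Jutila to $M_1(1+it)^\ell$ at level $\sigma_1\leq\sigma$, yielding $|\mathcal{T}_{2,\sigma_1,\sigma_2}|\ll T^{2\min\{\sigma_1,\sigma_2\}-\varepsilon^2/5}$, which is always the bound needed because $M_1^{-2\sigma_1}M_2^{-2\sigma_2}T^{2\min\{\sigma_1,\sigma_2\}}\leq 1$. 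Your proposal never applies a large-value estimate to $M_1$ or a power of it, and without that (or Hal\'asz--Montgomery) the $\mathcal{U}_1$ contribution is not controlled.
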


\begin{remark}\label{rem:typeII} The key aspect in Proposition~\ref{prop_typeII} is the value of $\sigma$, which we want to maximize, as the value of $\sigma$ eventually plays an important role in determining our exponent $c$ in Theorem~\ref{thm_minorant} in Subsection~\ref{subsec:typeII}. 

In~\cite{tera-primes}, integrals of the type~\eqref{eq_m1m2} were estimated by using a pointwise bound on $M_1(1+it)$ and the Hal\'asz--Montgomery inequality on the sparse mean square of $M_2(1+it)$, whereas in the proof of Proposition~\ref{prop_typeII} we obtain stronger estimates by first splitting the integral into pieces according to the sizes of $M_1(1+it)$, $M_2(1+it)$ and then applying Jutila's large values estimate. 
\end{remark}

\begin{proof}
It suffices to show that, for any one-spaced subset $\mathcal{T} \subseteq \mathcal{U}$, we have
\begin{align*}
\sum_{t \in \mathcal{T}} |M_1(1+it)|^2|M_2(1+it)|^2 \ll T^{-\varepsilon^2/10} + (\log T)^{O(1)} \sup_{t \in \mathcal{U}} |M_1(1+it)|^2.   
\end{align*}
We partition $\mathcal{T} = \mathcal{T}_1 \cup \mathcal{T}_2 \cup \mathcal{T}_3$, where
\begin{align*}
\mathcal{T}_1 &:= \{t \in \mathcal{T} \colon |M_1(1+it)| \geq M_1^{-10\varepsilon} \text{ or } |M_2(1+it)| \geq M_2^{-10\varepsilon} \}, \\
\mathcal{T}_2 &:= \{t \in \mathcal{T} \colon T^{-1} \leq |M_1(1+it)| < M_1^{-10\varepsilon} \text{ and } T^{-1} \leq |M_2(1+it)| < M_2^{-10\varepsilon} \}, \\
\mathcal{T}_3 &:= \{t \in \mathcal{T} \colon |M_1(1+it)| < T^{-1} \text{ or } |M_2(1+it)| < T^{-1} \}.
\end{align*}
Trivially
\[
\sum_{t \in \mathcal{T}_3} |M_1(1+it)|^2|M_2(1+it)|^2 \ll T \cdot \frac{1}{T^2} \cdot (\log T)^{O(1)} \ll \frac{1}{T^{1/2}}. 
\]
Let us turn to $\mathcal{T}_1$. Let $\ell_j:=\lceil (\log T)/(\log M_j)\rceil$ for $j=1,2$.
Now
\[
|\mathcal{T}_1|\ll M_1^{20\varepsilon\ell_1}\sum_{t\in \mathcal{T}}|M_1(1+it)|^{2\ell_1} + M_2^{20\varepsilon\ell_2}\sum_{t\in \mathcal{T}}|M_2(1+it)|^{2\ell_2}.
\]
Note that $\ell_j\ll 1$ and thus the coefficients of $M_j(s)^{\ell_j}$ are divisor-bounded. Hence by the discrete mean value theorem~\cite[Theorem 9.4]{iw-kow}, for $j = 1, 2$,
\begin{align*}
M_j^{20\varepsilon\ell_j}\sum_{t\in \mathcal{T}}|M_j(1+it)|^{2\ell_j}\ll 
T^{40\varepsilon}(T+(2M_j)^{\ell_j}) \frac{(\log T)^{O(1)}}{M_j^{\ell_j}} \ll T^{50\varepsilon}.
\end{align*}
Hence $|\mathcal{T}_1| \ll T^{50\varepsilon}$. 

Using the pointwise bound for $M_1(1+it)$ and the Hal\'asz--Montgomery inequality (see~\cite[Theorem 9.6]{iw-kow}) for $M_2(1+it)$, we obtain that
\begin{align*}
\sum_{t\in \mathcal{T}_1}|M_1(1+it)|^2|M_2(1+it)|^2 &\ll \sup_{t \in \mathcal{U}} |M_1(1+it)|^2 \left(1+\frac{|\mathcal{T}_{1}|T^{1/2}}{M_2}\right) (\log T)^{O(1)} \\
&\ll (\log T)^{O(1)} \sup_{t \in \mathcal{U}} |M_1(1+it)|^2. 
\end{align*}

Hence we can concentrate on $\mathcal{T}_2$. We partition the set $\mathcal{T}_2$ into $\ll (\log T)^{2}$ sets of the form 
\begin{align*}
\mathcal{T}_{2, \sigma_1, \sigma_2}=\{t\in \mathcal{T}_2:\,\, |M_1(1+it)|\in (M_1^{-\sigma_1}, 2M_1^{-\sigma_1}],\,\, |M_2(1+it)|\in (M_2^{-\sigma_2}, 2M_2^{-\sigma_2}]\}.    
\end{align*}
Note that the set is non-empty only if $\sigma_1, \sigma_2 \geq 10\varepsilon$ and $\min\{\sigma_1, \sigma_2\} \leq \sigma+1/\log M_1$. If $\sigma_1 \leq \sigma_2$, we choose an integer $1\leq \ell\ll 1$ such that $M_1^\ell \in [T^{5/6-\varepsilon}, T^{1-\varepsilon/2}]$ and apply Lemma~\ref{le_laregvalues}  to $M_1(1+it)^\ell$ (note that the coefficients of $M_1(s)^\ell$ are divisor-bounded). If $\sigma_2 < \sigma_1$ we apply Lemma~\ref{le_laregvalues} to $M_2(1+it)$ (which has length $\in [T^{9/11-\varepsilon}(\log T)^{-O(1)},T^{1-\varepsilon/5}(\log T)^{O(1)}]$). We obtain
\[
|\mathcal{T}_{2,\sigma_1,\sigma_2}| \ll T^{2\min\{\sigma_1, \sigma_2\}-\varepsilon^2/5},
\]
and consequently 
\[
\sum_{t\in \mathcal{T}_2}|M_1(1+it)|^2|M_2(1+it)|^2 \ll (\log T)^2 M_1^{-2\sigma_1} M_2^{-2\sigma_2} T^{2\min\{\sigma_1, \sigma_2\}-\varepsilon^2/5} \ll T^{-\varepsilon^2/10}.
\]
This completes the proof.
\end{proof}

\section{Proof of Theorem~\ref{thm_minorant}} \label{sec:proof}

Let $c=2.1$, $a \in [c-1-1/10000, c-1]$, $h=(\log X)^c$ and $P_1=(\log X)^{a}$ as in Theorem~\ref{thm_minorant}, and let $\rho^-$ be as in~\eqref{eq:rho-def} (with $z=X^{2/11}$). Write
\begin{align}\label{eq:pdef}
P_1(s)=\sum_{p_1\sim P_1}\frac{1}{p_1^s} \quad \text{and} \quad P(s)=\sum_{X/(2P_1) < n\leq  4X/P_1}\frac{\rho^-(n)}{n^{s}}.
\end{align}
Let also
\begin{align*}
T_0=X^{1/1000} \quad \text{and} \quad T_1 = X.
\end{align*}

By Lemma~\ref{lem:DirPolRed} it suffices to show that there exists $\varepsilon > 0$ such that, for any $T \geq X/h$,
\begin{align}\label{eq:dirpol}
\int_{T_0}^{T}|P_1(1+it)|^2|P(1+it)|^2\d t\ll \frac{T}{X/h} \cdot \frac{1}{(\log X)^{2+\varepsilon/10}}.  
\end{align}
If $T \geq T_1$, the mean value theorem (Lemma~\ref{le:contMVT}) and~\eqref{eq:rhobound} imply that
\begin{align*}
\int_{T_0}^{T}|P_1(1+it)|^2|P(1+it)|^2\d t\ll (T + X) \sum_{n \sim X} \frac{1}{n^2} \ll \frac{T}{X} \cdot \frac{h}{(\log X)^{2+\varepsilon/10}}
\end{align*}
since $h = (\log X)^c > (\log X)^{2+\varepsilon/10}$.

Hence we can assume that $T \in [X/h, T_1]$. We separate into two cases according to the size of $P_1(1+it)$. Let $[T_0, T_1] = \mathcal{T} \cup \mathcal{U}$, where
\begin{align*}
\mathcal{U}:=\{t\in [T_0,T_1]\colon |P_1(1+it)|\geq P_1^{-\varepsilon/10}\}.
\end{align*}
Recall from~\eqref{eq:rhobound} that $|\rho^-(n)| \ll \rho(n, z)$, so that by the improved mean value theorem (Lemma~\ref{le:contMVT2}) and a simple sieve upper bound (similar to e.g.~\cite[Theorem 6.7]{iw-kow}) we have for any $T \in [X/h, T_1]$,
\begin{align*}
\int_{-T}^T |P(1+it)|^2 \d t &\ll T \sum_{X/(2P_1)\leq n\leq  4X/P_1} \frac{1_{(n, P(z)) = 1}}{n^2}\\
&+ T\sum_{0<|k| \leq 4X/(P_1T)}\, \sum_{X/(2P_1)\leq n\leq  4X/P_1}  \frac{1_{(n(n+k), P(z)) = 1}}{n(n+k)} \\
&\ll \left(\frac{TP_1}{X \log X} + \frac{1}{(\log X)^2}\right)
\end{align*}
Using also the pointwise estimate $|P(1+it)| \leq P_1^{-\varepsilon/10}$ for $t \in \mathcal{T}$ and~\eqref{eq:aP1def}, we obtain, for any $T \in [X/h, T_1]$,
\begin{align*}
&\int_{\mathcal{T} \cap [T_0, T]}|P_1(1+it)|^2|P(1+it)|^2 \d t \ll \frac{1}{P_1^{\varepsilon/5}}  \left(\frac{TP_1}{X \log X} + \frac{1}{(\log X)^2}\right) \\
& \ll \frac{1}{(\log X)^{a \varepsilon/5}}  \left(\frac{Th/\log X}{X \log X} + \frac{1}{(\log X)^2}\right) \ll \frac{T}{X/h} \cdot \frac{1}{(\log X)^{2+\varepsilon/5}}.
\end{align*}

Hence, it suffices to show that
\begin{align*}
\int_{\mathcal{U}} |P(1+it)|^2 \d t \ll (\log X)^{-10}.    
\end{align*}

Let $A$ be sufficiently large. Recall the decomposition of $\rho^-(n) 1_{n \in (Y/2, 4Y]}$ from Proposition~\ref{prop_rho}. We pick $Y = X/P_1$ and let $c_n$ and $\mathcal{F}$ be as in Proposition~\ref{prop_rho}. By the mean value theorem (Lemma~\ref{le:contMVT}), and~\eqref{eq:cnbound}
\begin{align*}
\int_{\mathcal{U}} \left|\sum_{n} \frac{c_n}{n^{1+it}}\right|^2 \d t \ll \left(T_1+\frac{X}{P_1}\right) \left(\frac{P_1}{X}\right)^2 \sum_{n \asymp X/P_1} |c_n|^2 \ll_A (\log X)^{-A}.    
\end{align*}
Hence, recalling that $|\mathcal{F}| \ll \exp((\log \log X)^5)$, it suffices to show that, for any $f \in \mathcal{F}$, we have
\begin{align} 
\label{eq:Guintclaim}
\int_{\mathcal{U}} |F(1+it)|^2 \d t \ll \exp(-(\log \log X)^6),
\end{align}
where
\[
F(s) = \sum_{n} \frac{f(n)}{n^s}.
\]
We split into three cases as in Proposition~\ref{prop_rho}. In all three cases we utilize, similarly to~\cite{matomaki-radziwill}, the fact that $|P_1(1+it)| \geq P_1^{-\varepsilon/10}$ for every $t \in \mathcal{U}$ through inserting a factor $|P_1(1+it)|^{2k} P_1^{2k\varepsilon}$ for an appropriate $k$ to the left-hand side of~\eqref{eq:Guintclaim}.

\subsection{Type I case}\label{subsec:typeI}
Now $F(s) = M_1(s) M_2(s)$ with
\[
M_1(s) = \sum_{\substack{m_1 \sim M_1}} \frac{\alpha_{m_1}}{m_1^s} \quad \text{and} \quad M_2(s) = \sum_{\substack{M_2 < m_2 \leq (1+\delta_A) M_2}} \frac{1}{m_2^s}, 
\]
where $M_1 \leq X^{1/2+\varepsilon}$, $M_1 M_2 \in (X/(2P_1), 4X/P_1]$, and $\delta_A$ is given by~\eqref{eq:delta}. It suffices to show that, for any $T \in [T_0, T_1]$, we have
\[
\int_{\mathcal{U} \cap [T, 2T]} |M_1(1+it)|^2 |M_2(1+it)|^2 \d t \ll \exp(-(\log \log X)^6)(\log X)^{-1}.
\]

Let
\[
k = \left\lceil \frac{\log T^{1/10}}{\log P_1} \right\rceil,
\] 
so that $M := P_1^k \in [T^{1/10},P_1 T^{1/10}]$. 
Let
\begin{align*}
M(s)=P_1(s)^{k}=\sum_{P_1^{k} < m \leq (2P_1)^k} \frac{b_{m}}{m^{s}},    
\end{align*}
say. Note that since $P_1=(\log X)^{a}$ we have
\begin{align*}
|b_m| \leq k! \leq \exp(k \log k) = \exp\left(\frac{\log M}{\log P_1} \log \frac{\log M}{\log P_1}\right) \leq M^{1/a},
\end{align*}
so that
\begin{align}
\label{eq:bMS}
\sum_m |b_{m}|^2 \ll M^{1/a} \sum_m |b_m| \ll M^{1/a}P_1^k = M^{1+1/a}.    
\end{align}
By definition we have, for every $t\in \mathcal{U}$,
\begin{align*}
|M(1+it)|\geq M^{-\varepsilon/10} \iff 1 \leq M^{\varepsilon/10} |M(1+it)|.
\end{align*}

Using this and the Cauchy--Schwarz inequality we obtain
\begin{align*}
& \int_{\mathcal{U} \cap [T, 2T]} |M_1(1+it)|^2|M_2(1+it)|^2 \d t\\
\ll& M^{\varepsilon/10}\left(\int_{\mathcal{U} \cap [T, 2T]} |M_2(1+it)|^4|M(1+it)|^2 \d t\right)^{1/2}\left(\int_{\mathcal{U} \cap [T, 2T]} |M_1(1+it)|^4 \d t\right)^{1/2}.
\end{align*}
We apply the Deshouillers--Iwaniec mean value bound (Lemma~\ref{le:WattDI}(ii)) together with~\eqref{eq:bMS} to the first term and the mean value theorem (Lemma~\ref{le:contMVT}) to the second term, obtaining that the above is
\[
\ll M^{\varepsilon/10} T^\varepsilon\left(\frac{T + M^2 T^{1/2} + M^{5/4} T^{3/4}}{M_2^2 M} + \frac{T+M}{T^4 M} \right)^{1/2} M^{1/(2a)} \left(\frac{T+M_1^2}{M_1^2}\right)^{1/2}.
\]
Let us first note that since $M \leq T^{1/5}$, the contribution corresponding to the term $(T+M)/(T^4 M)$ is
\[
\ll M^{\varepsilon/10} T^\varepsilon \left(\frac{(T+M)(T+M_1^2)}{T^4MM_1^2}\right)^{1/2}M^{1/(2a)} \ll \frac{M^{1/(2a)+\varepsilon/10}}{T^{1-\varepsilon}}  \ll \frac{1}{T_0^{1/2}}\ll \frac{1}{X^\varepsilon}.
\]
The remaining terms are maximal when $T$ is maximal, i.e. $T = T_1= X$. In this case $T + M^2 T^{1/2} + M^{5/4} T^{3/4} \ll T$ and $M_1^2\leq X^{1+2\varepsilon}\leq T^{1+2\varepsilon}$, and the bound we obtain is
\[
\ll M^{\varepsilon/10} T^{2\varepsilon} \left(\frac{T^2}{M_1^2 M_2^2 M^{1-1/a}}\right)^{1/2} \ll \frac{1}{X^{\varepsilon}}.
\]
\begin{remark}
One could slightly loosen the condition $M_1 \leq X^{1/2+\varepsilon}$. The above argument with $k$ such that $P_1^k \approx T^{1/5-\varepsilon}$ would allow one to handle type I sums for $M_1 \leq X^{1/2+(1-1/a)/10-10\varepsilon}$.
\end{remark}

\subsection{Type I/II case}\label{subsec:typeI/II}
Now $F(s) = M_1(s) M_2(s) M_3(s)$ with
\begin{align*}
M_1(s) = \sum_{\substack{m_1 \sim M_1}} \frac{\alpha_{m_1}}{m_1^s}, \quad M_2(s) =  \sum_{\substack{m_2 \sim M_2}} \frac{\alpha_{m_2}}{m_2^s}, \quad M_3(s) =  \sum_{\substack{M_3 < m_3 \leq (1+\delta_A) M_3}} \frac{1}{m_3^s}
\end{align*}
and
\begin{align}\label{eq_cond3}
M_1^2M_2\leq X^{1-\varepsilon},\quad M_2\leq X^{1/4-\varepsilon}, \quad \text{and} \quad M_1 M_2 M_3 \in (X/(2P_1), 4X/P_1].
\end{align}
Similarly to Section~\ref{subsec:typeI}, it suffices to show that, for any $T \in [T_0, T_1]$, we have
\[
\int_{\mathcal{U} \cap [T, 2T]} |M_1(1+it)|^2 |M_2(1+it)|^2 |M_3(1+it)|^2 \d t \ll \exp(-(\log \log X)^6)(\log X)^{-1}
\]
We argue similarly to the type I case in Section~\ref{subsec:typeI}, but this time taking $M(s) := P_1(s)^k$ with
\[
k = \left\lceil \frac{\log T^{\varepsilon/2}}{\log P_1} \right\rceil,
\] 
so that $M := P_1^k \in [T^{\varepsilon/2},P_1 T^{\varepsilon/2}]$. By the Cauchy--Schwarz inequality, we have
\begin{align*}
& \int_{\mathcal{U} \cap [T, 2T]} |M_1(1+it)|^2|M_2(1+it)|^2|M_3(1+it)|^2 \d t\\
\ll& M^{\varepsilon/10}\left(\int_{\mathcal{U} \cap [T, 2T]} |M_3(1+it)|^4|M_2(1+it)|^2 \d t\right)^{1/2}\left(\int_{\mathcal{U} \cap [T, 2T]} |M_1(1+it)|^4|M_2M(1+it)|^2 \d t\right)^{1/2}.
\end{align*}
By Watt's bound (Lemma~\ref{le:WattDI}(i)), the mean value theorem (Lemma~\ref{le:contMVT}) and~\eqref{eq:bMS}, this is
\[
\ll M^{\varepsilon/10} T^{\varepsilon/100} \left(\frac{T+M_2^2 T^{1/2}}{M_3^2M_2}+\frac{T+M_2}{T^4 M_2}\right)^{1/2} \left(\frac{T + M_1^2 M_2 M}{M_1^2 M_2 M} \right)^{1/2}M^{1/(2a)}.
\]

Let us first note that since $M \leq T^\varepsilon$, the contribution corresponding to the term $(T+M_2)/(T^4 M_2)$ is
\[
\ll M^{\varepsilon/10} T^{\varepsilon/100} \left( \frac{(T+M_2)(T+M_1^2M_2M)}{T^4M_1^2M_2^2M}\right)^{1/2}M^{1/(2a)} \ll \frac{M^{1/(2a)+\varepsilon/10}}{T^{1-\varepsilon/100}} \ll \frac{1}{T_0^{1/2}}\ll \frac{1}{X^\varepsilon}.
\]
The remaining terms are maximal when $T$ is maximal, i.e. $T = T_1 = X$. In this case $T + M_2^2 T^{1/2} \ll T$ and $M_1^2 M_2 M \ll T$, and the bound we obtain is
\[
\ll M^{\varepsilon/10} T^{\varepsilon/100} \left(\frac{T^2}{M_1^2 M_2^2 M_3^2 M^{1-1/a}}\right)^{1/2}.
\]
By~\eqref{eq_cond3} this is $\ll X^{-\varepsilon/100}$.

\subsection{Type II case} \label{subsec:typeII}

Now $F(s) = M_1(s) M_2(s)$ with
\begin{align*}
M_1(s) = \sum_{M_1 < m_1 \leq (1+\delta_A)^R M_1} \frac{\alpha_{m_1}}{m_1^s}, \quad M_2(s) = \sum_{m_2\sim M_2} \frac{\beta_{m_2}}{m_2^s}
\end{align*}
for some $R \in \{1, \dotsc, \lfloor \frac{\log z}{\log z_0} \rfloor\}$ and
\begin{align*}
X^{\varepsilon/2} \leq M_1\leq z \quad \text{and} \quad M_1 M_2 \in (X/(2P_1), 4X/P_1],
\end{align*}
where $z_0=\exp((\log X)/(\log \log X)^3)$, $z=X^{2/11}$, and $\alpha_{m_1}$ are as in~\eqref{eq:alm1def}.

Recall that $\mathcal{U} \subseteq [T_0, T_1] = [X^{1/1000}, X]$. Let us partition $\mathcal{U} = \mathcal{U}_1 \cup \mathcal{U}_2 \cup \mathcal{U}_3$, where
\begin{align*}
\mathcal{U}_1 &:= \{t \in \mathcal{U} \colon |M_1(1+it)| \geq M_1^{-49/206+10\varepsilon} \text{ or } |M_2(1+it)| \geq M_2^{-49/206+10\varepsilon} \}, \\
\mathcal{U}_2 &:= \{t \in \mathcal{U} \colon X^{-1} \leq |M_1(1+it)| < M_1^{-49/206+10\varepsilon} \text{ and } X^{-1} \leq |M_2(1+it)| < M_2^{-49/206+10\varepsilon} \}, \\
\mathcal{U}_3 &:= \{t \in \mathcal{U} \colon |M_1(1+it)| < X^{-1} \text{ or } |M_2(1+it)| < X^{-1} \}.
\end{align*}
Proposition~\ref{prop_typeII} immediately implies that
\[
\int_{\mathcal{U}_1} |M_1(1+it)|^2 |M_2(1+it)|^2\d t \ll \exp(-(\log X)^{1/10}),
\]
since
\begin{align}\label{eq:Mcancel}
\sup_{T_0 \leq t \leq T_1} |M_1(1+it)| \ll \exp(-2(\log X)^{1/10})
\end{align}
as a corollary of the Vinogradov--Korobov zero-free region (see e.g.~\cite[Lemma 1.5]{harman-sieves}). Furthermore trivially
\[
\int_{\mathcal{U}_3} |M_1(1+it)|^2 |M_2(1+it)|^2\d t \ll X \cdot \frac{1}{X^2} \cdot (\log X)^{O(1)} \ll \frac{1}{X^{1/2}}.
\]
Hence we can concentrate on $\mathcal{U}_2$. We split it into $\ll (\log X)^2$ sets of the form
\begin{align}
\label{eq:Us1s2def}
\mathcal{U}_{2, \sigma_1, \sigma_2}:=\{t\in \mathcal{U}_2 \colon |M_1(1+it)|\in (M_1^{-\sigma_1}, 2M_1^{-\sigma_1}], |M_2(1+it)|\in (M_2^{-\sigma_2}, 2M_2^{-\sigma_2}]\}.    
\end{align}
By the definition of $\mathcal{U}_2$, this is non-empty only when $\sigma_j> 49/206-10\varepsilon$ for $j = 1, 2$. In order to deduce~\eqref{eq:Guintclaim}, it now suffices to show that, for any $\sigma_1, \sigma_2 > 49/206-10\varepsilon$, we have
\begin{equation}
\label{eq:Us1s2claim}
|\mathcal{U}_{2, \sigma_1, \sigma_2}| \ll M_1^{2\sigma_1} M_2^{2\sigma_2} \exp(-(\log \log X)^7).
\end{equation}

Let 
\[
k = \left\lfloor \frac{\log T_1}{\log (2P_1)}\right\rfloor
\] 
and $M=P_1^k \in [T_1/(2^kP_1), T_1/2^k]$. Define
\begin{align*}
M(s):=\frac{P_1(s)^k}{k!}=\sum_{m\in \mathcal{M}}\varepsilon_m m^{-s},    
\end{align*}
say, where $\varepsilon_m\in [0,1]$ and $\mathcal{M}$ consists of products of $k$ primes from $(P_1, 2P_1]$. In particular, all $m\in \mathcal{M}$ are $2P_1$-smooth, so that
\begin{align*}
|\mathcal{M}|\leq M^{1-1/a+o(1)}
\end{align*}
by a standard upper bound for the number of smooth numbers (see e.g.~\cite[(1.14)]{ht}). 
Moreover, $\mathcal{M} \subseteq (P_1^k, (2P_1)^k] \subseteq [M,T_1]$ and, by the definition of $\mathcal{U}$,
\begin{align}
\label{eq:Mlow}
|M(1+it)|\geq \frac{M^{-\varepsilon/10}}{k!} \geq M^{-\varepsilon/10-1/a+o(1)}    \textnormal{ for all } t\in \mathcal{U}.
\end{align}

Next we bound $|\mathcal{U}_{2, \sigma_1, \sigma_2}|$ in two different ways. First, by~\eqref{eq:Us1s2def} and~\eqref{eq:Mlow}, we have
\begin{align*}
|\mathcal{U}_{2, \sigma_1, \sigma_2}| \ll M^{\varepsilon/5+2/a+o(1)} M_2^{2 \sigma_2} \int_{\mathcal{U}_{2, \sigma_1, \sigma_2}}|M_2(1+it)|^{2} |M(1+it)|^2 \d t.  
\end{align*}
By Lemma~\ref{le_HB_MVT} applied with $N = M_2$ (and noting that $|\mathcal{M}| \leq T_1^{1/3}$ and $M \in [T_1^{1-\varepsilon^2}, T_1]$), we see that 
\begin{align*}
|\mathcal{U}_{2, \sigma_1, \sigma_2}| &\ll M^{\varepsilon/5+2/a+o(1)} M_2^{2 \sigma_2} \left( \left(\frac{|\mathcal{M}|}{M}\right)^2+T_1^{\varepsilon/10} \frac{|\mathcal{M}|T_1}{M^2 M_2}\right)\\
&\ll M^{\varepsilon/5+o(1)} \left(M_2^{2 \sigma_2} + \frac{T_1^{1+\varepsilon/10} M^{1/a}}{M M_2^{1-2\sigma_2}}\right).   
\end{align*}

Hence~\eqref{eq:Us1s2claim} holds provided that
\begin{align}\label{eq7}
M^{\varepsilon/5} \ll \frac{M_1^{2\sigma_1}}{T_1^{\varepsilon^2}} \quad \text{and} \quad M_1^{1-2\sigma_1}\ll T_1^{-\varepsilon/3} M^{1-1/a}.
\end{align}
Since $M \in [T_1^{1-\varepsilon^2}, T_1]$, $M_1 \geq T_1^{\varepsilon/2}$ and $\sigma_1 > 49/206-10\varepsilon>1/5+2\varepsilon$, the first claim always holds when $\varepsilon$ is sufficiently small. The second claim holds for sufficiently small $\varepsilon$ when
\[
a > \frac{1}{1-\theta(1-2\sigma_1)-\varepsilon},
\]
where we have denoted
\begin{align}\label{eq:theta}
\theta=\frac{\log M_1}{\log X} \in \left[\frac{\varepsilon}{2}, \frac{2}{11}\right].
\end{align}
Hence we can from now on assume that 
\begin{equation}
\label{eq:thsig1}
\theta (1-2\sigma_1) \geq 1-\frac{1}{a}-\varepsilon \quad \text{and} \quad \sigma_1\in \left[\frac{49}{206}-10\varepsilon, \frac{1}{2}\right).
\end{equation}

On the other hand, using~\eqref{eq:Us1s2def} and~\eqref{eq:Mlow} and arguing similarly as before, we see that
\begin{align*}
|\mathcal{U}_{2, \sigma_1, \sigma_2}| &\ll M^{\varepsilon/5+2/a+o(1)} M_1^{10 \sigma_1} \int_{\mathcal{U}_{2, \sigma_1, \sigma_2}}|M_1(1+it)|^{10} |M(1+it)|^2 \d t\\
&\ll M^{\varepsilon/5+o(1)} M_1^{10\sigma_1} \left(1 + \frac{T_1^{1/a + 3\varepsilon}}{M_1^5}\right).   \end{align*}
Since $M_1 \leq T_1^{2/11+\varepsilon/2}$, the second term dominates when $a \leq 11/10$.
Hence~\eqref{eq:Us1s2claim} holds if 
\[
T_1^{1/a+4\varepsilon} \leq M_1^{5-8\sigma_1} M_2^{2\sigma_2}.
\]
Recalling~\eqref{eq:theta} and that $\sigma_2 \geq 49/206-10\varepsilon$, this holds if
\[
(5-8\sigma_1)\theta + (1-\theta)\cdot 2\cdot \frac{49}{206} > \frac{1}{a}+25\varepsilon \iff 4(1-2\sigma_1)\theta + \frac{49}{103} + \theta \left(1-2\cdot \frac{49}{206}\right) > \frac{1}{a}+25\varepsilon.
\]
Now the left-hand side is increasing in $\theta$ (since $\sigma_1 <1/2$), so using~\eqref{eq:thsig1} it suffices to have
\[
4-\frac{4}{a} -4\varepsilon+ \frac{49}{103} + \left(1-\frac{1}{a}-\varepsilon\right) \frac{1-2\cdot \frac{49}{206}}{1-2\sigma_1} > \frac{1}{a}+25\varepsilon.
\]

Since $\sigma_1 \geq 49/206-10\varepsilon$, this holds if
\[
\frac{6}{a} < 5 + \frac{49}{103}-100\varepsilon, 
\]
which in turn holds for $\varepsilon>0$ small enough if
\[
a>\frac{103}{94}+100\varepsilon.
\]
But $\frac{103}{94} < 1.1-1/10000$, and the claim follows.

\begin{remark}\label{rem:gen}
We note that in general if one had Proposition~\ref{prop_typeII} with $\sigma$ in place of $49/206$ and $\theta$ in place of $2/11$ (with $1/5+2\varepsilon\leq \sigma\leq 1/2$), then the first part of the argument in Subsection~\ref{subsec:typeII} would imply that one can deal with type II sums with one variable from $[X^{\varepsilon/2}, X^{\theta}]$ with the exponent
\begin{align}
\label{eq:cgen}
c=1+\frac{1}{1-\theta(1-2\sigma)-\varepsilon}    
\end{align}
in Theorem~\ref{thm_minorant} (with $a\in [c-1-\varepsilon^2,c-1]$ in place of~\eqref{eq:aP1def}).
\end{remark}

\begin{remark}\label{rem:improv}
One could optimize the argument in several ways, but we have decided not to do so, as our relatively clean argument already gives a very substantial improvement over~\cite{tera-primes}. 

For example, one could prove stronger variants of Lemma~\ref{le_laregvalues} inside the set $\mathcal{U}$ by using amplification by $P_1(1+it)^k P_1^{k\varepsilon/10}$ inside the proof of Jutila's large value result and then replacing Jutila's application of fourth moment of zeta by the Deshouillers--Iwaniec theorem (Lemma~\ref{le:WattDI}(ii)) or by Heath-Brown's sparse mean value theorem (Lemma~\ref{le_HB_MVT}). This would lead to a slight improvement of Proposition~\ref{prop_typeII}.

Furthermore, one could obtain better large value results by taking better into account the shape and length of the polynomials $M_1(s)$ and $M_2(s)$. In particular, in the proof of Proposition~\ref{prop_typeII} one could get a better lower bound for large values of $M_1(s)$ since $M_1^\ell \geq T^{5/6} \geq T^{9/11}$, so better large value theorems are available for $M_1(s)^\ell$ than for $M_2(s)$. On the other hand, for $M_2(s)$ it might be of benefit to decompose it further into a product of Dirichlet polynomials and apply large value theorems for its components.
\end{remark}
\section{Results with Heath-Brown's identity} 
\label{se:H-B}
Instead of using Harman's sieve, one could use Heath-Brown's identity. This way the argument would be somewhat simpler, but one would only obtain Theorem~\ref{thm_main} with a somewhat larger interval length $(\log x)^{2+3/13+\varepsilon}$. More precisely, one would obtain
\begin{align}
\label{eq:H-Bresult}
|\{p_1p_2\in (x,x+h]:\, (\log x)^{a}<p_1\leq (\log x)^{a+\varepsilon/2}\}|\gg_{\varepsilon} \frac{h}{\log x}    
\end{align}
for all but $\ll X/(\log X)^{\delta}$ integers $x\in [2,X]$, with $h = (\log x)^{2+3/13+\varepsilon}$ and $a=1+3/13$. On the other hand, assuming the Lindel\"of hypothesis, one can use Heath-Brown's identity to obtain $h = (\log x)^{2+\varepsilon}$ and $a=1+\varepsilon/2$.

We sketch the proofs here: It suffices to show that, with $P_1\in ((\log X)^{a},(\log X)^{a+\varepsilon}]$, $h_1 = X^{99/100}$,  $h=(\log X)^{a+1}$ and $a$ as in one of the above claims,
\begin{align*}
\frac{1}{X}\int_{X}^{2X}\left|\frac{1}{h}\sum_{\substack{x < p_1n\leq x+h\\p_1 \sim P_1}}\Lambda(n)-\frac{1}{h_1}\sum_{\substack{x < p_1n\leq x+h_1\\p_1 \sim P_1}}\Lambda(n)\right|^2 \d x \ll\frac{1}{(\log X)^{\varepsilon}},
\end{align*}
where $\Lambda(n)$ is the von Mangoldt function. We reduce to mean squares of Dirichlet polynomials as in Lemma~\ref{lem:DirPolRed} (but with $\Lambda$ in place of $\rho^{-}$ and $(\log X)^{-\varepsilon}$ in place of $(\log X)^{-2-\varepsilon}$) and handle $\int_{\mathcal{T}}$ as in Section~\ref{sec:proof}. 

Let $L \in \mathbb{N}$ be fixed and $Y = X/P_1$. Applying Heath-Brown's identity~\cite[Proposition 13.37]{iw-kow} and splitting the variables into short intervals gives a set $\mathcal{F}$ consisting of $(\log X)^{O_{A, L}(1)}$ functions $f \colon \mathbb{N} \to \mathbb{C}$ such that
\[
\Lambda(n) 1_{n \in (Y/2, 4Y]} = \sum_{f \in \mathcal{F}} f(n) + c_n
\]
where $c_n$ is as in Proposition~\ref{prop_rho} and each $f$ is of the form
\[
f = a^{(1)} \ast \dotsb \ast a^{(\ell)}
\]
for some $\ell \leq 2L$ with each $a^{(i)}(n)$ one of $1_{(N_i, (1+\delta_A)N_i]} \log n$, $1_{(N_i, (1+\delta_A)N_i]}$ or $1_{(N_i, (1+\delta_A)N_i]}\mu(n)$. Moreover $N_1 \dots N_\ell \asymp Y$ and, for each $i$ with $a^{(i)}(n) = 1_{(N_i, (1+\delta_A)N_i]}\mu(n)$, we have $N_i \ll Y^{1/L}$. 

\subsection{Unconditional result with \texorpdfstring{$h = (\log x)^{2+3/13+\varepsilon}$}{h=(log x)2+3/13+eps}}
For the unconditional result we choose $L = 3$ in Heath-Brown's identity. If $X^{\varepsilon/10} \leq N_j\leq X^{1/3}$ for some $j$, then $f(n)$ is a type II sum
\begin{align*}
\sum_{\substack{n=m_1m_2\\M_1 < m_1 \leq (1+\delta_A) M_1 \\ m_2 \sim M_2}} \alpha_{m_1} \beta_{m_2}  
\end{align*}
with $\alpha_{m_1}$ either $1$, $\log m_1$ or $\mu(m_1)$ and
\begin{align*}
X^{\varepsilon/10} \leq M_1\leq X^{1/3} \quad \text{and} \quad M_1 M_2 \in (X/(2P_1), 4X/P_1].
\end{align*}
Otherwise the product of two largest $N_j$ must be $\gg X^{1-4\varepsilon/10}/P_1$ and hence we have a type I sum
\begin{align*}
\sum_{\substack{n=m_1m_2\\ m_1 \sim M_1 \\ M_2 < m_2 \leq (1+\delta_A) M_2}}\alpha_{m_1} \quad \text{or} \quad \sum_{\substack{n=m_1m_2\\ m_1 \sim M_1 \\ M_2 < m_2 \leq (1+\delta_A) M_2}}\alpha_{m_1} \log m_2  
\end{align*}
with $M_1 \leq X^{1/2+\varepsilon}$ and $M_1 M_2 \in (X/(2P_1), 4X/P_1]$.
The type I sums can be handled as before (using partial summation in the second case).

For type II sums we argue similarly to Section~\ref{subsec:typeII}, but use a variant of Lemma~\ref{le_laregvalues}, where $T^{1-\varepsilon/5} \geq N \geq T^{2/3}$ and $5\varepsilon \leq \sigma \leq 7/32-10\varepsilon$. Such a variant follows from Bourgain's zero density estimate~\cite[Lemma 4.60]{bourgain}. The coefficients $\alpha_{m_1}$ are of different shape than previously. However, this is not an issue since $M_1(s)$ still satisfies~\eqref{eq:Mcancel} and furthermore, since $M_1 \geq X^{\varepsilon/10}$, in the proof of the variant of Proposition~\ref{prop_typeII} the parameter $\ell_1$ is bounded, so that the coefficients of $M_1(s)^{\ell_1}$ are divisor-bounded.

Now we have in~\eqref{eq:cgen} $\theta = 1/3$ and $\sigma = 7/32-10\varepsilon$ which, after adjusting $\varepsilon$, gives Theorem~\ref{thm_minorant} with $c = 2+3/13+\varepsilon$. and $a\in [c-1-\varepsilon/2,c-1]$

\subsection{The Lindel\"of hypothesis implies \texorpdfstring{$h = (\log x)^{2+\varepsilon}$}{h=(log x)2+eps}}
\label{subsec:Lindelof}
To obtain the result under the Lindel\"of hypothesis, we apply Heath-Brown's identity with $L = \lceil 1/\varepsilon \rceil$. Now if $X^{\varepsilon/(10L)} \leq N_j \leq X^{\varepsilon}$ for some $j$, then $f(n)$ is a type II sum
\begin{align*}
\sum_{\substack{n=m_1m_2\\M_1 < m_1 \leq (1+\delta_A) M_1 \\ m_2 \sim M_2}} \alpha_{m_1} \beta_{m_2}  
\end{align*}
with $\alpha_{m_1}$ either $1$, $\log m_1$ or $\mu(m_1)$ and
\begin{align*}
X^{\varepsilon/(10L)} \leq M_1\leq X^{\varepsilon} \quad \text{and} \quad M_1 M_2 \in (X/(2P_1), 4X/P_1].
\end{align*}
In this case we have in~\eqref{eq:cgen} $\theta = \varepsilon$ (and can take e.g. $\sigma = 7/32-10\varepsilon$) which, after adjusting $\varepsilon$, gives $c = 2+\varepsilon$.

Hence we can assume that all the factors longer than $X^{\varepsilon/(10L)}$ have coefficients $1$ or $\log$. Now if $N_j \geq X^{1/2-\varepsilon}$ for some $j$, then we have a type I sum which can be dealt with as before.

In the remaining case we have, for some $\ell \in \{2, \dotsc, 2L\}$, a type $I_{\ell}$ sum of the form
\begin{align*}
\sum_{\substack{n=k m_1m_2 \dotsm m_\ell \\ k \sim K \\M_j < m_j \leq (1+\delta_A) M_j}}\alpha_{k} \beta_{m_1} \dotsc \beta_{m_\ell} 
\end{align*}
with $K \leq X^{\varepsilon/5}$, $M_1, \dotsc, M_\ell \in (X^\varepsilon, X^{1/2-\varepsilon}], K M_1 M_2 \dotsm M_\ell \in (X/(2P_1), 4X/P_1]$, and $\beta_{m_j} \in \{1, \log m_j\}$. 

Under the Lindel\"of hypothesis we have, for $j = 1, \dotsc, \ell$, 
\[
\left|\sum_{M_j< m_j \leq (1+\delta_A)M_j }\frac{\beta_{m_j}}{m_j^{1+it}} \right| \ll_{\varepsilon} \frac{(|t|+1)^{\varepsilon^2/10}}{M_j^{1/2}}+\frac{1}{|t|}
\]
(this follows e.g. from~\cite[(9.21)]{iw-kow} and partial summation). For those $t$ for which the second term dominates for some $j$, we can bound all the other Dirichlet polynomials trivially, obtaining a contribution of 
\[
\int_{T_0}^{T_1} \frac{(\log X)^{O(1)}}{|t|^2}\d t \ll \frac{1}{T_0^{1/2}}.
\]
Otherwise we essentially have in~\eqref{eq:cgen} $\theta \in (\varepsilon, 1/2-\varepsilon]$ and $\sigma = 1/2-\varepsilon$ which, after adjusting $\varepsilon$, gives again Theorem~\ref{thm_minorant} with $c = 2+\varepsilon$. and $a\in [c-1-\varepsilon/2,c-1]$.

\section{All intervals} \label{sec:allintervals}

We lastly turn to the proof of Theorem~\ref{thm_all}, which closely follows the proof of Theorem~\ref{thm_main}. 

Let $a = 1.1$, $c=1+a/2$, $h=\sqrt{x}(\log x)^c$, and $P_1=(\log x)^{a}$. We will show that
\begin{align*}
\left|\left\{p_1p_2p_3\in (x,x+h]:\, p_1\sim P_1,p_2\sim \frac{\sqrt{x/P_1}}{2}\right\}\right|\gg \frac{h}{(\log P_1)(\log x)^2}.
\end{align*}

Let $\rho^{-}$ be the same minorant function as in Theorem~\ref{thm_minorant} but with $\sqrt{x P_1}$ in place of $X$, i.e. $\rho^-$ is defined by~\eqref{eq:rho-def} with $X = \sqrt{xP_1}$ and $z = \sqrt{x P_1}^{2/11}$. Then it suffices to show that
\begin{align*}
\sum_{\substack{x < p_1p_2n\leq x+h\\p_1\sim P_1\\p_2\sim \sqrt{x/P_1}/2}}\rho^{-}(n)\gg \frac{h}{(\log P_1)(\log x)^2}.      
\end{align*}
By a slight variant of Theorem~\ref{thm_minorant}(ii) which is proved in the same way, this reduces to showing that
\begin{align*}
\frac{1}{h}\sum_{\substack{x < p_1p_2n\leq x+h\\p_1\sim P_1\\p_2\sim \sqrt{x/P_1}/2}}\rho^{-}(n)=\frac{1}{h_1}\sum_{\substack{x < p_1p_2n\leq x+h_1\\p_1\sim P_1\\p_2\sim \sqrt{x/P_1}/2}}\rho^{-}(n)+o\left(\frac{1}{(\log P_1)(\log x)^2}\right),
\end{align*}
where $h_1=x^{99/100}$. Next we sketch the standard deduction to mean values of Dirichlet polynomials. Write 
\begin{align*}
P_1(s):=\sum_{p_1\sim P_1}p_1^{-s},\quad P_2(s):=\sum_{p_2\sim \sqrt{x/P_1}/2}p_2^{-s},\quad P(s):=\sum_{\sqrt{x/P_1}/2 < n\leq 4\sqrt{x/P_1}}\rho^{-}(n)n^{-s},
\end{align*}
and $\alpha(s) = P_1(s) P_2(s) P(s)$. Using Perron's formula (see e.g.~\cite[Corollary 5.3]{MVbook}, noting that the coefficients of $\alpha(s)$ are bounded and supported on $m \asymp x$), and dealing with the integral over $[-T_0, T_0]$ similarly to e.g.~\cite[Proof of Lemma 7.2]{harman-sieves}), it suffices to show, for some small $\varepsilon>0$,
\begin{align*}
\frac{1}{h}\int_{T_0}^{x (\log x)^{10}}|P_1(1+it)||P_2(1+it)||P(1+it)| \min\left\{\frac{x}{|t|}, h\right\} \d t \ll \frac{1}{(\log x)^{2+\varepsilon/2}},    
\end{align*}
where $T_0=x^{1/1000}$.
Considering separately the integral over $[T_0, x/h]$ and splitting the remaining integral over $(x/h, x(\log x)^{10}]$ dyadically into $\ll \log \log x$ integrals, we see that it suffices to show that, for any $T \in [x/h, x(\log x)^{10}]$, we have
\begin{align}\label{eq:p1p2p2}
\int_{T_0}^{T}|P_1(1+it)||P_2(1+it)||P(1+it)| \d t \ll \frac{T}{x/h} \cdot \frac{1}{(\log x)^{2+\varepsilon}}.    
\end{align}

Note that by the improved mean value theorem (Lemma~\ref{le:contMVT2}) we have, for $T \geq x/h$,
\begin{align*}
\int_{T_0}^{T}|P_2(1+it)|^2\d t\ll \frac{T}{\sqrt{x/P_1}\log x}+\frac{1}{(\log x)^2}\ll \frac{T}{x/h}\cdot \frac{1}{(\log x)^2}
\end{align*}
since $h=\sqrt{x P_1}\log x$. Hence, applying the Cauchy--Schwarz inequality to \eqref{eq:p1p2p2}, it suffices to prove that, for any $T \geq x/h$,
\begin{align*}
\int_{T_0}^{T}|P_1(1+it)|^2|P(1+it)|^2\d t\ll \frac{T}{x/h} \frac{1}{(\log x)^{2+2\varepsilon}}.     
\end{align*}
But since $a = 1.1$, this is essentially the claim~\eqref{eq:dirpol} that was proved in Section~\ref{sec:proof} (after adjusting $\varepsilon$ and replacing $X$ with $\sqrt{xP_1}$).

\bibliography{refs}
\bibliographystyle{plain}
\end{document}